\newtheorem{theorem}{Theorem}[section]
\newtheorem{corollary}[theorem]{Corollary}
\newtheorem{lemma}[theorem]{Lemma}
\theoremstyle{definition}
\newtheorem{definition}[theorem]{Definition}
\newtheorem{remark}[theorem]{Remark}
\numberwithin{equation}{section}
\title[Multiplicity and orbital stability]
      {Multiplicity and orbital stability of  normalized solutions to non-autonomous  Schr\"{o}dinger equation with mixed nonlinearities}
\author[Xinfu Li, Li Xu, Meiling Zhu]{}
 \keywords{Normalized solutions; Multiplicity; Orbital stability; Non-autonomous Sobolev critical Schr\"{o}dinger equation;
 Variational methods}
\thanks{*Corresponding author. Email Addresses:  lxylxf@tjcu.edu.cn; beifang\_xl@163.com.}
\begin{document}
\maketitle

\centerline{\scshape Xinfu Li$^a$,\  Li Xu$^{a*}$,\ Meiling Zhu$^b$}
\medskip
{\footnotesize
 \centerline{$^a$School of Science, Tianjin University of Commerce, Tianjin 300134,
P. R. China}\centerline{$^\mathrm{b}$College of Computer Science and
Engineering, Cangzhou Normal University,} \centerline{Cangzhou,
Hebei, 061000, P. R. China}}

\bigskip
 %\centerline{(Communicated by the associate editor name)}

\begin{abstract}
This paper studies the  multiplicity  of normalized solutions to the
Schr\"{o}dinger equation with mixed nonlinearities
\begin{equation*}
\begin{cases}
-\Delta u=\lambda u+h(\epsilon x)|u|^{q-2}u+\eta |u|^{p-2}u,\quad
x\in \mathbb{R}^N, \\
\int_{\mathbb{R}^N}|u|^2dx=a^2,
\end{cases}
\end{equation*}
where  $a, \epsilon, \eta>0$, $q$ is $L^2$-subcritical, $p$ is
$L^2$-supercritical, $\lambda\in \mathbb{R}$ is an unknown parameter
that appears as a Lagrange multiplier, $h$ is a positive and
continuous function. It is proved that the numbers of normalized
solutions are at least the numbers of global maximum points of $h$
when $\epsilon$ is small enough. Moreover, the orbital stability of
the solutions obtained is analyzed as
well. In particular, our results cover the Sobolev critical case $p=2N/(N-2)$.\\
\textbf{2020 Mathematics Subject Classification}: 35A15; 35J10;
35B33.
\end{abstract}

\section{Introduction and main results}

\setcounter{section}{1}
\setcounter{equation}{0}

In this paper, we study the multiplicity and orbital stability of
normalized solutions to the non-autonomous Schr\"{o}dinger equation
with mixed nonlinearities
\begin{equation}\label{e1.1}
\begin{cases}
-\Delta u=\lambda u+h(\epsilon x)|u|^{q-2}u+\eta |u|^{p-2}u,\quad
x\in \mathbb{R}^N, \\
\int_{\mathbb{R}^N}|u|^2dx=a^2,
\end{cases}
\end{equation}
where  $N\geq 1,  a, \epsilon, \eta>0$, $2<q<2+\frac{4}{N}<p\left\{
\begin{array}{ll}
< +\infty, & N=1,2,\\
\leq 2^*:= \frac{2N}{N-2}, &N\geq 3,
\end{array}
\right.$ and $\lambda\in \mathbb{R}$ is an unknown parameter that
appears as a Lagrange multiplier. The function $h$  satisfies the
following conditions:\\
($h_1$) $h\in C(\mathbb{R}^N,\mathbb{R})$ and $0<h_0=\inf_{x\in
\mathbb{R}^N}h(x)\leq \max_{x\in \mathbb{R}^N}=h_{\text{max}}$;\\
($h_2$) $h_{\infty}=\lim_{|x|\to +\infty}h(x)<h_{\text{max}}$;\\
($h_3$) $h^{-1}(h_{\text{max}})=\{a_1,a_2,\cdots,a_l\}$ with $a_1=0$
and $a_j\neq a_i$ if $i\neq j$.

A solution $u$ to the problem (\ref{e1.1}) corresponds to a critical
point of the functional
\begin{equation}\label{e1.2}
E_{\epsilon}(u):=\frac{1}{2}\int_{\mathbb{R}^N}|\nabla
u|^2dx-\frac{1}{q}\int_{\mathbb{R}^N}h(\epsilon
x)|u|^{q}dx-\frac{\eta}{p}\int_{\mathbb{R}^N}|u|^{p}dx
\end{equation}
restricted to the sphere
\begin{equation*}
S(a):=\{u\in H^1(\mathbb{R}^N):\int_{\mathbb{R}^N}|u|^2dx=a^2\}.
\end{equation*}
It is well known that $E_{\epsilon}\in
C^1(H^1(\mathbb{R}^N),\mathbb{R})$ and
\begin{equation*}
E_{\epsilon}'(u)\varphi=\int_{\mathbb{R}^N}\nabla u\nabla \varphi
dx-\int_{\mathbb{R}^N}h(\epsilon x)|u|^{q-2}u\varphi
dx-\eta\int_{\mathbb{R}^N}|u|^{p-2}u\varphi dx
\end{equation*}
for any $\varphi\in H^1(\mathbb{R}^N)$.

One motivation driving the search for normalized solutions to
(\ref{e1.1}) is the nonlinear Schr\"{o}dinger equation
\begin{equation}\label{e1.3}
i\frac{\partial \psi}{\partial t}+\Delta \psi+g(|\psi|^2)\psi=0,\
(t,x)\in \mathbb{R}\times\mathbb{R}^N.
\end{equation}
Since the mass $\int_{\mathbb{R}^N}|\psi|^2dx$ is preserved along
the flow associated with  (\ref{e1.3}), it is natural to consider it
as prescribed. Searching for standing wave solution
$\psi(t,x)=e^{-i\lambda t}u(x)$ of (\ref{e1.3})  leads to
(\ref{e1.1}) for $u$ with $g(|s|^2)s = h(\epsilon x)|s|^{q-2}s+\eta
|s|^{p-2}s$. In recent decades, the research of finding normalized
solutions to Schr\"{o}dinger equations has received a special
attention. This seems to be particularly meaningful from the
physical point of view, as the $L^2$-norm is a preserved quantity of
the evolution and the variational characterization of such solutions
is often a strong help to analyze their orbital stability, see
\cite{{Bellazzini,Jeanjean,Luo13},{Cazenave-Lions82},{SoaveJDE},{SoaveJFA}}
and the references therein.

In the study of normalized solutions to the Schr\"{o}dinger equation
\begin{equation}\label{e1.4}
-\Delta u=\lambda u+|u|^{p-2}u,\quad x\in \mathbb{R}^N,
\end{equation}
the number $\bar{p}:= 2 + 4/ N$, labeled $L^2$-critical exponent, is
a very important number, because in the study of (\ref{e1.4}) using
variational methods, the functional
\begin{equation*}
J(u):=\frac{1}{2}\int_{\mathbb{R}^N}|\nabla
u|^2dx-\frac{1}{p}\int_{\mathbb{R}^N}|u|^pdx,\ u\in
H^1(\mathbb{R}^N)
\end{equation*}
is bounded from below on $S(a)$ for the $L^2$-subcritical problem,
i.e., $2 < p < 2 + 4/ N$. Thus, a solution of (\ref{e1.4}) can be
found as a global minimizer of $J|_{S(a)}$, see
\cite{{Shibata17},{Stuart82}}. For the purely $L^2$-supercritical
problem, i.e., $2+ 4/N < p < 2^*$, $J|_{S(a)}$ is unbounded from
below (and from above). Related to this case, a seminar paper due to
Jeanjean \cite{Jeanjean97} exploited the mountain pass geometry to
get a normalized solution, see
\cite{{Bartsch-Valeriola13},{Bartsch-Soave17},{Bieganowski-Mederski20},{Hirata-Tanaka19},{Ikoma-Tanaka19},{Jeanjean-Lu19},{Jeanjean-Lu
20}} for more results. In the purely $L^2$-critical case (i.e., $p =
2+ 4/N$), the result is delicate. Recently, the Schr\"{o}dinger
equation with double power form nonlinearity
$\mu|u|^{q-2}u+|u|^{p-2}u$ has been extensively studied due to Soave
\cite{{SoaveJDE},{SoaveJFA}}, see
\cite{{Alves-Chao21},{JEANJEAN-JENDREJ},{Jeanjean-Le-MA},{Li-Cv-21},{Stefanov
19},{Wei-Wu21}} for more results. The multiplicity of normalized
solutions to the autonomous Schr\"{o}dinger equation or systems  has
also been considered extensively at the last years, see
\cite{{Alves-Chao21},{Bartsch-Valeriola13},{Bartsch-Soave19},{Gou-Jeanjean18},
{Hirata-Tanaka19},{Ikoma-Tanaka19},{Jeanjean-Le-MA},{Jeanjean-Lu19},{Jeanjean-Lu
20},{Luo-Wei-Yang-Zhen22},{Luo-Yang-Zou21}}. As for the existence of
normalized solutions to  the non-autonomous Schr\"{o}dinger equation
\begin{equation}\label{e1.5}
-\Delta u=\lambda u+f(x,u),\quad x\in \mathbb{R}^N,
\end{equation}
we refer to
\cite{{Bellazzini17},{Ikoma-Miyamoto20},{Li-Zhao20},{Luo-Peng19},{Pellacci-Pistoia19}}
and the references therein.

Our study is motivated by Alves \cite{Alves-ZAMP22}, where they
considered the multiplicity of normalized solutions to
\begin{equation}\label{e1.6}
-\Delta u=\lambda u+h(\epsilon x)f(u),\quad x\in \mathbb{R}^N
\end{equation}
with $f$ being $L^2$-subcritical. Their arguments depend on the
existence of global minimizer and the relative compactness of any
minimizing sequence of the functional $\tilde{J}|_{S(a)}$
corresponding to the limit problem
\begin{equation}\label{e1.7}
-\Delta u=\lambda u+\mu f(u),\quad x\in \mathbb{R}^N.
\end{equation}
While in our problem  (\ref{e1.1}), the appearance of the
$L^2$-supercritical term $\eta|u|^{p-2}u$ makes the functional to
the limit problem
\begin{equation}\label{e1.8}
-\Delta u=\lambda u+\mu|u|^{q-2}u+\eta|u|^{p-2}u,\quad x\in
\mathbb{R}^N
\end{equation}
with $q<2+4/N<p$ is unbounded from below (and from above). But in
view of the studies of \cite{{Jeanjean-Le-MA},{SoaveJDE}}, we know
that the functional in this case has a local minimizer. So employing
the truncated skill used in \cite{{Alves-Chao21},{Peral Alonso
1997}}, we can isolate  the local minimizer and obtain the
multiplicity of normalized solutions to the problem (\ref{e1.1}).
The application of truncated functions and the appearance of the
Sobolev critical exponent $p=2^*$ make more delicate analysis is
needed. Furthermore, we also consider the orbital stability of the
solutions obtained (see Section 5). We should point out that in
\cite{Alves-Chao21}, the authors studied the multiplicity of
normalized solutions to the autonomous Schr\"{o}dinger equation
(\ref{e1.8}) with $q<2+4/N<p=2^*$ in radial symmetry space
$H_{rad}^1(\mathbb{R}^N)$ by using truncated skill and genus theory.
Note that our problem (\ref{e1.1}) is non-autonomous and not
radially symmetry, so their method is not work in our problem.

The main results of this paper are as follows.

\begin{theorem}\label{thm1.1}
Let $N,\epsilon,a,\eta,p,q,h$ be as in (\ref{e1.1}).
 We further assume that $h_{\text{max}}, a$ and $\eta$ satisfy some condition (i.e.,(\ref{e2.1}) holds
for $p<2^*$, (\ref{e2.1}) and (\ref{e2.4}) hold for $p=2^*$). Then
there exists $\epsilon_0>0$ such that (\ref{e1.1}) admits at least
$l$ couples $(u_j,\lambda_j)\in H^1(\mathbb{R}^N)\times \mathbb{R}$
of weak solutions for $\epsilon\in (0,\epsilon_0)$ with
$\int_{\mathbb{R}^N}|u_j|^2dx=a^2$, $\lambda_j<0$ and
$E_{\epsilon}(u_j)<0$ for $j=1,2,\cdots,l$.
\end{theorem}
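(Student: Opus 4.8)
The plan is to use a concentration–compactness and minimization strategy, adapting the Alves–type multiplicity argument to the mixed, $L^2$-supercritical setting via truncation.

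The plan is to combine a truncation argument with a topological multiplicity scheme of photography type, carried out on the constraint $S(a)$.

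First I would tame the unboundedness caused by the $L^2$-supercritical term $\eta|u|^{p-2}u$. Following the truncation idea of \cite{Alves-Chao21,Peral Alonso 1997}, I would replace $E_\epsilon$ by a modified functional $E_\epsilon^{T}$ that coincides with $E_\epsilon$ on $B_{\rho_0}:=\{u\in S(a):\|\nabla u\|_2\le\rho_0\}$ but is bounded from below on all of $S(a)$, for a suitable $\rho_0>0$ whose existence is guaranteed by condition (\ref{e2.1}) (and (\ref{e2.4}) when $p=2^*$). The smallness built into these conditions is precisely what makes $E_\epsilon$ exhibit a local-minimum geometry: along the $L^2$-preserving scaling $u\mapsto s^{N/2}u(s\,\cdot)$ the fiber map $s\mapsto E_\epsilon(s^{N/2}u(s\,\cdot))$ has a strict local minimum at a negative value, separated by a barrier from the region where the functional plunges to $-\infty$. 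The truncation flattens the descent beyond $\rho_0$, so that the local minimum becomes the global minimum of $E_\epsilon^{T}$ on $S(a)$, attained in the region $\{\|\nabla u\|_2<\rho_0\}$. I would then verify that any critical point of $E_\epsilon^{T}$ lying in the interior $\{\|\nabla u\|_2<\rho_0\}$ is automatically a critical point of the original $E_\epsilon$.

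Next I would set up the autonomous limit problem. Let $\mu=h_{\text{max}}$ and let $m_\mu$ be the local-minimum level for the truncated functional associated with (\ref{e1.8}); the existence and negativity of this local minimizer are furnished by \cite{Jeanjean-Le-MA,SoaveJDE}. I would establish compactness of minimizing sequences for the limit problem up to translations, the standard concentration–compactness step; when $p=2^*$ the delicate point is to exclude vanishing, dichotomy, and loss of mass into a Sobolev-critical bubble, using condition (\ref{e2.4}) to keep the relevant energy strictly below the critical threshold $\tfrac1N S^{N/2}$. With $m_\mu$ in hand I would prove the energy comparison $c_\epsilon\to m_\mu$ as $\epsilon\to0$, where $c_\epsilon$ is the constrained minimum level of $E_\epsilon^{T}$. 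The multiplicity then comes from a barycenter argument tied to $h^{-1}(h_{\text{max}})=\{a_1,\dots,a_l\}$: for each $a_i$ I build test functions obtained by concentrating the limit minimizer near $a_i/\epsilon$, which by $(h_2)$ and $(h_3)$ have truncated energy below $m_\mu+\delta$ for small $\delta,\epsilon$, while a barycenter map $\beta_\epsilon$ sends them near the distinct points $a_i$. Composing the concentration map with $\beta_\epsilon$ and using that $\beta_\epsilon$ is homotopic to the identity near $\{a_1,\dots,a_l\}$ shows that the sublevel set $\{E_\epsilon^{T}\le m_\mu+\delta\}\cap S(a)$ has Lusternik–Schnirelmann category at least $l$, which yields at least $l$ critical points with negative energy. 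Finally I would check that each such critical point lies in $\{\|\nabla u\|_2<\rho_0\}$, hence solves (\ref{e1.1}), that $E_\epsilon(u_j)<0$ from the level bound, and that $\lambda_j<0$ by testing the equation with $u_j$ and combining with the Pohozaev identity.

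The hard part will be the compactness analysis in the Sobolev-critical case $p=2^*$: one must simultaneously control the translation non-compactness inherited from the non-autonomous, non-radial setting and the scale non-compactness due to critical bubbling, and show that the relevant minimizing (or Palais–Smale) sequences at level $m_\mu+\delta$ converge after translation. Condition (\ref{e2.4}) is the quantitative device that pins the relevant energy levels below the first critical threshold, thereby ruling out bubble formation and restoring the compactness on which both the limit-problem analysis and the barycenter count depend.
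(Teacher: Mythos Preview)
Your overall architecture---truncate to make the functional bounded below, study the autonomous limit problem, then localize via a barycenter map near each maximum point of $h$---matches the paper. The genuine difference is the multiplicity mechanism. You propose a Lusternik--Schnirelmann category argument of Benci--Cerami type, showing $\mathrm{cat}\bigl(\{E_\epsilon^{T}\le m_\mu+\delta\}\cap S(a)\bigr)\ge l$. The paper instead works more directly: it defines the barycenter $Q_\epsilon(u)=\int\chi(\epsilon x)|u|^2/\int|u|^2$, sets $\theta_\epsilon^i=\{u\in S(a):|Q_\epsilon(u)-a_i|\le\tilde\rho\}$, proves the key gap $\beta_\epsilon^i:=\inf_{\theta_\epsilon^i}E_{\epsilon,T}<\inf_{\partial\theta_\epsilon^i}E_{\epsilon,T}$, and then applies Ekeland's principle on each $\theta_\epsilon^i$ separately to get a $(PS)_{\beta_\epsilon^i}$ sequence whose limit stays in the interior. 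Disjointness of the $\theta_\epsilon^i$ gives $l$ distinct critical points. For a target set consisting of $l$ isolated points your category route ultimately reduces to this localization anyway, so the paper's approach is the more elementary of the two and avoids abstract LS machinery.

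Two smaller points where your outline diverges from the paper. First, condition (\ref{e2.4}) is not used as an energy threshold of the form $c<\tfrac1N S^{N/2}$; rather it guarantees $R_0<\eta^{-(N-2)/4}S^{N/4}$, and this is invoked (Lemma~4.5) to show that any $(PS)$ remainder $v_n=u_n-u_\epsilon$ that fails to vanish in $H^1$ must carry a definite amount of $L^2$-mass $\beta>0$ independent of $\epsilon$---this uniform mass then feeds into the subadditivity estimate for the level comparison. Second, the sign $\lambda_j<0$ does not require Pohozaev: the paper simply writes $\tfrac12\lambda_j a^2=E_\epsilon(u_j)+(\tfrac1q-\tfrac12)\int h(\epsilon x)|u_j|^q+(\tfrac1p-\tfrac12)\eta\int|u_j|^p$, and every term on the right is nonpositive since $E_\epsilon(u_j)<0$ and $q,p>2$.
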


\begin{theorem}\label{rmk1.2}
The solutions obtained in Theorem \ref{thm1.1} is orbitally stable
in some sense. To state this theorem,  we need  some notations used
in the proof of Theorem \ref{thm1.1}, so we give the details in
Section 5. We point out that we give the stability of $l$ different
sets, which is very different from the existing results.
\end{theorem}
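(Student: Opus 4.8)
The plan is to establish orbital stability for the $l$ sets of solutions produced in Theorem~\ref{thm1.1}. Since the precise statement is deferred to Section~5, I will proceed along the standard Cazenave--Lions variational route adapted to the truncated/localized setting. The idea is that each of the $l$ solutions arises as a local minimizer of the truncated energy functional on a suitable subset $V_j\subset S(a)$ (the ``well'' around the $j$-th global maximum point $a_j$ of $h$), isolated by the truncation so that minimizing sequences cannot escape. The first thing I would do is define the candidate stable sets
\begin{equation*}
\mathcal{M}_j:=\{u\in V_j: E_\epsilon(u)=c_j\text{ and }u\text{ is a minimizer on }V_j\},
\end{equation*}
where $c_j$ is the corresponding local minimum level, and then show each $\mathcal{M}_j$ is nonempty (from Theorem~\ref{thm1.1}) and invariant under the flow of \eqref{e1.3}.

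The key steps, in order, are as follows. First I would record the two conservation laws for the Schr\"odinger flow \eqref{e1.3}: conservation of mass $\|\psi(t)\|_{L^2}^2=a^2$ (keeping the solution on $S(a)$) and conservation of energy $E_\epsilon(\psi(t))=E_\epsilon(\psi(0))$; these require a local well-posedness statement in $H^1(\mathbb{R}^N)$, which I would invoke for the subcritical range and handle more carefully when $p=2^*$. Second, I would prove a compactness lemma: any minimizing sequence for $E_\epsilon$ on the localized region $V_j$, staying in the truncated regime, is relatively compact up to translations — but here, because the problem is non-autonomous, translations are \emph{broken} by the factor $h(\epsilon x)$, so I expect genuine compactness (no translation quotient) once the truncation confines the sequence away from $\partial V_j$ and away from the ``dichotomy at infinity'' governed by $h_\infty<h_{\max}$. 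This strict inequality $(h_2)$ is exactly what rules out mass escaping to infinity and should give strong convergence in $H^1$. Third, with compactness in hand I would run the usual contradiction argument: if some $\mathcal{M}_j$ were not stable, there would exist $\delta>0$, initial data $\psi_{0,n}\to \mathcal{M}_j$, and times $t_n$ with $\mathrm{dist}(\psi(t_n),\mathcal{M}_j)\ge\delta$; conservation of mass and energy force $\psi(t_n)$ to be a minimizing sequence, which by compactness converges (subsequentially) to a point of $\mathcal{M}_j$, contradicting the distance bound.

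The main obstacle I anticipate is twofold. The more serious difficulty is the Sobolev critical case $p=2^*$: here global-in-time well-posedness and the conservation laws are no longer automatic (finite-time blow-up is possible), so the stability statement must be phrased conditionally — typically ``stable as long as the solution exists'' or restricted to data for which the flow is global — and the compactness lemma must contend with the failure of compactness of the embedding $H^1\hookrightarrow L^{2^*}$, which I would control using condition \eqref{e2.4} to keep the minimizing level strictly below the critical threshold where the best Sobolev constant obstructs compactness. The second, more structural, subtlety is that because the truncation only produces \emph{local} minimizers, I must verify that the minimizing sequences are trapped in the interior of the well $V_j$ and never reach the truncation boundary, so that the limit is a genuine critical point of the untruncated functional with the prescribed sign $\lambda_j<0$ and $E_\epsilon(u_j)<0$; this is what lets me conclude stability of each individual set $\mathcal{M}_j$ rather than merely of the union. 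I would close by remarking that the separation of the $l$ wells (guaranteed by $(h_3)$ and smallness of $\epsilon$) is precisely what makes the stability of the $l$ \emph{distinct} sets meaningful, as the authors emphasize.
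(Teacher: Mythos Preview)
Your outline is essentially the paper's own strategy: define for each $i$ the set of constrained minimizers inside the $i$-th well, invoke mass/energy conservation, use the compactness furnished by the truncated functional together with $h_\infty<h_{\max}$ to upgrade minimizing sequences to strong $H^1$ convergence (this is the paper's Lemma~\ref{lem4.3}), and close with the Cazenave--Lions contradiction. The paper's ``wells'' are the sets $\theta_\epsilon^i=\{u\in S(a):|Q_\epsilon(u)-a_i|\le\tilde\rho\}$ cut out by the barycenter map $Q_\epsilon$, and the trapping you anticipate is exactly the strict inequality $\beta_\epsilon^i<\tilde\beta_\epsilon^i$ of Lemma~\ref{lem4.7}.

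Two points where your plan should be sharpened. First, in the contradiction argument the perturbed data $u_{0,n}$ are only \emph{near} $S(a)$, not on it, so $\psi_n(t_n,\cdot)$ is not a minimizing sequence on $S(a)$; the paper inserts the rescaling $\varphi_n(t,\cdot)=a\,\psi_n(t,\cdot)/\|\psi_n(t,\cdot)\|_2$ and checks (via continuity in $t$ and the gap $\beta_\epsilon^i<\tilde\beta_\epsilon^i$) that $\varphi_n(t,\cdot)$ stays in $\theta_\epsilon^i\setminus\partial\theta_\epsilon^i$ for all $t\ge 0$ before invoking compactness. Second, your expectation that the critical case $p=2^*$ forces a merely conditional statement is too pessimistic: the whole point of the local-minimizer geometry is that the energy barrier $\bar g_a(R_0)>\beta_\epsilon^i$ traps $\|\nabla\psi(t)\|_2$ strictly below $R_0$, which (following \cite{JEANJEAN-JENDREJ}) yields genuine global existence and hence unconditional orbital stability, under the extra regularity hypothesis $h\in C^1$ with $h'\in L^\infty$ needed for the local theory. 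Condition~\eqref{e2.4} is used earlier, in the compactness lemma, to rule out Sobolev bubbling, exactly as you guessed.
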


\begin{remark}\label{rmk1.3}
In \cite{{JEANJEAN-JENDREJ},{Jeanjean-Le-MA},{SoaveJDE},{SoaveJFA}},
the authors considered the normalized solutions to (\ref{e1.8}) with
$q<2+4/N<p$. They obtained a ground state solution to (\ref{e1.8})
with negative energy which is local minimizer and orbitally stable
and a mountain-pass type solution with positive energy which is
strongly instable. In this paper, the solutions obtained in Theorem
\ref{thm1.1} are also local minimizers, but we do not know whether
they are ground state solutions. The appearance of the potential $h$
increases the number of the local minimizer and maintains its
stability.
\end{remark}

This paper is organized as follows. In Section 2, we define the
truncated functional used in the study. In Section 3, we study the
properties of the truncated autonomous functional. In Section 4, we
study the truncated non-autonomous problem and give the proof of
Theorem \ref{thm1.1}. In Section 5, we study the orbital stability
of the solutions obtained in Theorem \ref{thm1.1}.

\medskip

\textbf{Notation}:  For $t\geq 1$, the  $L^t$-norm of $u\in
L^t(\mathbb{R}^N)$ is denoted by $\|u\|_t$. The usual norm of $u\in
H^1(\mathbb{R}^N)$ is denoted by $\|u\|$. $C,C_1,C_2,\cdots$ denote
any positive constant, whose value is not relevant and may be change
from line to line. $o_n(1)$ denotes a real sequence with $o_n(1) \to
0$ as $n\to +\infty$. `$\rightarrow$' denotes  strong convergence
and `$\rightharpoonup$' denotes weak convergence.
$B_{r}(x_0):=\{x\in\mathbb{R}^N:|x-x_0|<r\}$.

\section{Truncated functionals}
\setcounter{section}{2} \setcounter{equation}{0}

In the proof of Theorem \ref{thm1.1}, we will adapt for our case a
truncated function found in Peral Alonso (\cite{Peral Alonso 1997},
Chapter 2, Theorem 2.4.6).

In what follows, we will consider the functional $E_\epsilon$ given
by (\ref{e1.2}) restricts to $S(a)$. By the Sobolev embedding and
the Gagliardo-Nirenberg inequality (see \cite{Weinstein 1983})
\begin{equation}\label{e2.9}
\begin{cases}
\|u\|_{t}\leq C_{N,t}\|u\|_2^{1-\gamma_t}\|\nabla u\|_2^{\gamma_t},\
2<t\left\{
\begin{array}{ll}
<\infty, & N=1,2,\\
\leq 2^*, &N\geq 3,
\end{array}
\right.   \gamma_t:=\frac{N}{2}-\frac{N}{t},\\
 C_{N,2^*}=S^{-\frac{1}{2}},\
\text{and}\  S:=\inf_{ u\in
D^{1,2}(\mathbb{R}^N)\setminus\{0\}}\frac{\int_{\mathbb{R}^N}|\nabla
u|^2dx}{\left(\int_{\mathbb{R}^N}|u|^{2^*}dx\right)^{2/2^*}},
\end{cases}
\end{equation}
we have
\begin{equation}\label{e2.13}
\begin{split}
E_{\epsilon}(u)&\geq  \frac{1}{2}\int_{\mathbb{R}^N}|\nabla
u|^2dx-\frac{1}{q}h_{\text{max}}\int_{\mathbb{R}^N}|u|^{q}dx-\frac{\eta}{p}\int_{\mathbb{R}^N}|u|^{p}dx\\
&\geq \frac{1}{2}\|\nabla
u\|_2^2-\frac{1}{q}h_{\text{max}}C_{N,q}^qa^{q(1-\gamma_q)}\|\nabla
u\|_2^{q\gamma_q}-\frac{\eta}{p}C_{N,p}^pa^{p(1-\gamma_p)}\|\nabla
u\|_2^{p\gamma_p}\\
&=g_a(\|\nabla u\|_2)
\end{split}
\end{equation}
for any $u\in S(a)$, where
\begin{equation*}
g_a(r):=\frac{1}{2}r^2-\frac{1}{q}h_{\text{max}}C_{N,q}^qa^{q(1-\gamma_q)}r^{q\gamma_q}
-\frac{\eta}{p}C_{N,p}^pa^{p(1-\gamma_p)}r^{p\gamma_p},\ r>0.
\end{equation*}
Set $g_a(r)=r^2w_a(r)$ with
\begin{equation*}
w_a(r):=\frac{1}{2}-\frac{1}{q}h_{\text{max}}C_{N,q}^qa^{q(1-\gamma_q)}r^{q\gamma_q-2}
-\frac{\eta}{p}C_{N,p}^pa^{p(1-\gamma_p)}r^{p\gamma_p-2},\ r>0.
\end{equation*}

Now we study the properties of $w_a(r)$. Note that
\begin{equation*}
t\gamma_t\left\{
\begin{array}{ll}
<2, & 2<t<2+4/N,\\
=2, & t=2+4/N,\\
>2, & 2+4/N<t\leq 2^*
\end{array}
\right.\quad\text{and}\quad \gamma_{2^*}=1.
\end{equation*}
It is obvious that $\lim_{r\to 0^+}w_a(r)=-\infty$ and  $\lim_{r\to
+\infty}w_a(r)=-\infty$. By direct calculations, we obtain that
\begin{equation*}
w_a'(r)=-\frac{1}{q}h_{\text{max}}C_{N,q}^qa^{q(1-\gamma_q)}(q\gamma_q-2)r^{q\gamma_q-3}
-\frac{\eta}{p}C_{N,p}^pa^{p(1-\gamma_p)}(p\gamma_p-2)r^{p\gamma_p-3}.
\end{equation*}
Then the equation $w_a'(r)=0$ has a unique solution
\begin{equation*}
r_0=\left(\frac{(2-q\gamma_q)\frac{1}{q}h_{\text{max}}C_{N,q}^qa^{q(1-\gamma_q)}}
{(p\gamma_p-2)\frac{\eta}{p}C_{N,p}^pa^{p(1-\gamma_p)}}\right)^{\frac{1}{p\gamma_p-q\gamma_q}}
\end{equation*}
and the maximum of $w_a(r)$ is
\begin{equation*}
w_a(r_0)=\frac{1}{2}-B\left(h_{\text{max}}a^{q(1-\gamma_q)}\right)^{\frac{p\gamma_p-2}{p\gamma_p-q\gamma_q}}
\left(\eta
a^{p(1-\gamma_p)}\right)^{\frac{2-q\gamma_q}{p\gamma_p-q\gamma_q}},
\end{equation*}
where
\begin{equation*}
B=\frac{p\gamma_p-q\gamma_q}{2-q\gamma_q}\left(\frac{2-q\gamma_q}{p\gamma_p-2}\right)^{\frac{p\gamma_p-2}{p\gamma_p-q\gamma_q}}
\left(\frac{C_{N,q}^q}{q}\right)^{\frac{p\gamma_p-2}{p\gamma_p-q\gamma_q}}
\left(\frac{C_{N,p}^p}{p}\right)^{\frac{2-q\gamma_q}{p\gamma_p-q\gamma_q}}.
\end{equation*}
Thus, if we assume
\begin{equation}\label{e2.1}
\left(h_{\text{max}}a^{q(1-\gamma_q)}\right)^{\frac{p\gamma_p-2}{p\gamma_p-q\gamma_q}}
\left(\eta
a^{p(1-\gamma_p)}\right)^{\frac{2-q\gamma_q}{p\gamma_p-q\gamma_q}}<\frac{1}{2B},
\end{equation}
then the maximum of $w_a(r)$ is positive and $w_a(r)$ has exactly
two zeros $0<R_0<R_1<\infty$, which are also the zeros of $g_a(r)$.
It is obvious that $g_a(r)$ has the following properties
\begin{equation}\label{e2.11}
\begin{cases}
&g_a(0)=g_a(R_0)=g_a(R_1)=0;\ g_a(r)<0\ \text{for}\ r>0\
\text{small};\\
&\lim_{r\to+\infty}g_a(r)=-\infty;\  g_a(r)\ \text{has\ exactly\
two\
critical\ points\ }\\
&r_1\in (0,R_0)\ \text{and}\ r_2\in (R_0,R_1)\ \text{with}\
g_a(r_1)<0 \ \text{and}\ g_a(r_2)>0.
\end{cases}
\end{equation}
For $p=2^*$, we further assume that
$R_0<\eta^{-\frac{N-2}{4}}S^{\frac{N}{4}}$, which is satisfied if we
assume
\begin{equation}\label{e2.3}
r_0<\eta^{-\frac{N-2}{4}}S^{\frac{N}{4}},
\end{equation}
because $R_0<r_0<R_1$. By the expression of $r_0$, (\ref{e2.3}) is
equivalent to
\begin{equation}\label{e2.4}
\begin{split}
\left(h_{\text{max}}a^{q(1-\gamma_q)}\right)^{\frac{1}{p\gamma_p-q\gamma_q}}
&\eta^{\frac{N-2}{4}-\frac{1}{p\gamma_p-q\gamma_q}}\\
&\leq \left(\frac{(2-q\gamma_q)C_{N,q}^q2^*S^{\frac{2^*}{2}}}
{q(p-2)}\right)^{\frac{-1}{p\gamma_p-q\gamma_q}}S^{\frac{N}{4}}.
\end{split}
\end{equation}

Now fix $\tau: (0,+\infty)\to [0,1]$ as being a non-increasing and
$C^\infty$ function that satisfies
\begin{equation}\label{e2.7}
\tau(x)=\left\{
\begin{array}{ll}
1, &\text{if}\ x\leq R_0,\\
0, &\text{if}\ x\geq R_1
\end{array}
\right.
\end{equation}
and  consider the truncated functional
\begin{equation}\label{e2.2}
E_{\epsilon,T}(u):=\frac{1}{2}\int_{\mathbb{R}^N}|\nabla
u|^2dx-\frac{1}{q}\int_{\mathbb{R}^N}h(\epsilon
x)|u|^{q}dx-\frac{\eta}{p}\tau(\|\nabla
u\|_2)\int_{\mathbb{R}^N}|u|^{p}dx.
\end{equation}
Thus,
\begin{equation}\label{e2.8}
\begin{split}
E_{\epsilon,T}(u)&\geq \frac{1}{2}\|\nabla
u\|_2^2-\frac{1}{q}h_{\text{max}}C_{N,q}^qa^{q(1-\gamma_q)}\|\nabla
u\|_2^{q\gamma_q}\\
&\qquad-\frac{\eta}{p}\tau(\|\nabla
u\|_2)C_{N,p}^pa^{p(1-\gamma_p)}\|\nabla
u\|_2^{p\gamma_p}\\
&=\bar{g}_a(\|\nabla u\|_2)
\end{split}
\end{equation}
for any $u\in S(a)$, where
\begin{equation*}
\bar{g}_a(r):=\frac{1}{2}r^2-\frac{1}{q}h_{\text{max}}C_{N,q}^qa^{q(1-\gamma_q)}r^{q\gamma_q}
-\frac{\eta}{p}\tau(r)C_{N,p}^pa^{p(1-\gamma_p)}r^{p\gamma_p}.
\end{equation*}
It is easy to see  that $\bar{g}_a(r)$ has the following properties
\begin{equation}\label{e2.12}
\begin{cases}
&\bar{g}_a(r)\equiv {g}_a(r)\ \text{for\ all}\ r\in [0,R_0];\\
&\bar{g}_a(r)\ \text{is\ positive\ and\ strictly\ increasing\ in\
}(R_0,+\infty).
\end{cases}
\end{equation}

Correspondingly, for any $\mu\in (0,h_{\text{max}}]$, we denote by
$J_{\mu},\ J_{\mu,T}: H^1(\mathbb{R}^N)\to\mathbb{R}$ the following
functionals
\begin{equation}\label{e2.5}
J_{\mu}(u):=\frac{1}{2}\int_{\mathbb{R}^N}|\nabla
u|^2dx-\frac{\mu}{q}\int_{\mathbb{R}^N}|u|^{q}dx-\frac{\eta}{p}\int_{\mathbb{R}^N}|u|^{p}dx
\end{equation}
and
\begin{equation}\label{e2.6}
J_{\mu,T}(u):=\frac{1}{2}\int_{\mathbb{R}^N}|\nabla
u|^2dx-\frac{\mu}{q}\int_{\mathbb{R}^N}|u|^{q}dx-\frac{\eta}{p}\tau(\|\nabla
u\|_2)\int_{\mathbb{R}^N}|u|^{p}dx.
\end{equation}

The properties of $J_{\mu,T}$ and $E_{\epsilon,T}$ will be studied
in Sections 3 and 4, respectively.

\section{The truncated autonomous functional}
\setcounter{section}{3} \setcounter{equation}{0}

In this section, we study the properties of the functional
$J_{\mu,T}$ defined in (\ref{e2.6}) restricted to $S(a_1)$, where
$\mu\in(0,h_{\text{max}}]$ and $a_1\in (0,a]$.

\begin{lemma}\label{lem3.2}
Let $N, a,\eta,p,q$ be as in (\ref{e1.1}), (\ref{e2.1}) hold,
$\mu\in (0,h_{\text{max}}]$, $0<a_1\leq a$.
 Then the functional $J_{\mu,T}$ is bounded from below in $S(a_1)$.
\end{lemma}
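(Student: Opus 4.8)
The plan is to reduce the problem to the one–variable comparison already built into Section 2, namely the function $\bar{g}_a$ appearing in (\ref{e2.8}). First I would apply the Gagliardo--Nirenberg inequality (\ref{e2.9}) to the two nonlinear terms over $S(a_1)$: for $u\in S(a_1)$ one has $\|u\|_q\le C_{N,q}a_1^{1-\gamma_q}\|\nabla u\|_2^{\gamma_q}$ and $\|u\|_p\le C_{N,p}a_1^{1-\gamma_p}\|\nabla u\|_2^{\gamma_p}$ (in the critical case $p=2^*$ the latter uses $C_{N,2^*}=S^{-1/2}$ and $\gamma_{2^*}=1$). Since $\tau\ge 0$, substituting these bounds gives, with $r=\|\nabla u\|_2$,
\begin{equation*}
J_{\mu,T}(u)\ge \frac12 r^2-\frac{\mu}{q}C_{N,q}^q a_1^{q(1-\gamma_q)}r^{q\gamma_q}-\frac{\eta}{p}\tau(r)C_{N,p}^p a_1^{p(1-\gamma_p)}r^{p\gamma_p}.
\end{equation*}

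The key step is then to dominate this right-hand side by $\bar{g}_a(r)$. Because $2<q<2+\frac{4}{N}$ forces $0<\gamma_q<1$ and $2+\frac{4}{N}<p\le 2^*$ forces $0\le 1-\gamma_p$ (with $1-\gamma_p=0$ exactly when $p=2^*$), the exponents $q(1-\gamma_q)$ and $p(1-\gamma_p)$ are nonnegative. Hence $a_1\le a$ yields $a_1^{q(1-\gamma_q)}\le a^{q(1-\gamma_q)}$ and $a_1^{p(1-\gamma_p)}\le a^{p(1-\gamma_p)}$, while the hypothesis $\mu\le h_{\text{max}}$ controls the remaining coefficient. Since both nonlinear terms are subtracted, enlarging their coefficients only decreases the expression, so the displayed bound is in turn $\ge \bar{g}_a(r)$; that is, $J_{\mu,T}(u)\ge \bar{g}_a(\|\nabla u\|_2)$ for every $u\in S(a_1)$.

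Finally I would invoke the structure of $\bar{g}_a$ recorded in (\ref{e2.12}), which is available precisely because (\ref{e2.1}) guarantees the two zeros $R_0<R_1$ of $g_a$. On $[0,R_0]$ we have $\bar{g}_a\equiv g_a$, a continuous function on a compact interval, hence bounded below (its minimum being $g_a(r_1)<0$ at the critical point $r_1\in(0,R_0)$ from (\ref{e2.11})), while on $(R_0,+\infty)$ the function $\bar{g}_a$ is positive. Therefore $\inf_{r\ge 0}\bar{g}_a(r)=g_a(r_1)>-\infty$, and consequently $J_{\mu,T}\ge g_a(r_1)$ on all of $S(a_1)$, the claimed lower bound.

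The only place any care is needed is the monotonicity in $a_1$ of the Gagliardo--Nirenberg coefficients: it relies on $q(1-\gamma_q)\ge 0$ and $p(1-\gamma_p)\ge 0$, which must be checked separately in the Sobolev-critical case $p=2^*$, where $\gamma_p=1$ and the second coefficient becomes independent of $a_1$. The conceptual heart of the lemma, however, is that the truncation $\tau$ is exactly what converts the unbounded-below tail of $g_a$ (coming from the $L^2$-supercritical term) into the bounded, increasing tail of $\bar{g}_a$; without it the term $r^{p\gamma_p}$ with $p\gamma_p>2$ would drive the functional to $-\infty$.
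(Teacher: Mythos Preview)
Your proof is correct and follows essentially the same route as the paper: apply the Gagliardo--Nirenberg inequality (\ref{e2.9}) on $S(a_1)$, use $\mu\le h_{\text{max}}$ and $a_1\le a$ (together with the nonnegativity of the exponents $q(1-\gamma_q)$, $p(1-\gamma_p)$) to pass to $\bar{g}_a(\|\nabla u\|_2)$, and conclude via the structural properties (\ref{e2.12}). Your write-up is in fact somewhat more explicit than the paper's, which compresses the chain of inequalities into a single display and does not spell out why the coefficient comparison in $a_1$ works or identify the infimum as $g_a(r_1)$.
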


\begin{proof}
By (\ref{e2.8}) and (\ref{e2.12}), for any $u\in S(a_1)$,
\begin{equation*}
\begin{split}
J_{\mu,T}(u)&\geq J_{\text{max},T}(u) \\
& \geq \frac{1}{2}\|\nabla
u\|_2^2-\frac{1}{q}h_{\text{max}}C_{N,q}^qa_1^{q(1-\gamma_q)}\|\nabla
u\|_2^{q\gamma_q}\\
&\qquad\qquad-\frac{\eta}{p}\tau(\|\nabla
u\|_2)C_{N,p}^pa_1^{p(1-\gamma_p)}\|\nabla u\|_2^{p\gamma_p}\\
&\geq \bar{g}_a(\|\nabla u\|_2)\geq \inf_{r\geq
0}\bar{g}_a(r)>-\infty.
\end{split}
\end{equation*}
The proof is complete.
\end{proof}

\begin{lemma}\label{lem3.4}
Let $N, a,\eta,p,q$ be as in (\ref{e1.1}), (\ref{e2.1}) hold,
$\mu\in (0,h_{\text{max}}]$, $0<a_1\leq a$.
$\Upsilon_{\mu,T,a_1}:=\inf_{u\in S(a_1)}J_{\mu,T}(u)<0$.
\end{lemma}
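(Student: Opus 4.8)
The plan is to show the infimum is negative by exhibiting a single test function on which $J_{\mu,T}$ is already negative. Since $\Upsilon_{\mu,T,a_1}$ is an infimum over $S(a_1)$, it suffices to produce one $v\in S(a_1)$ with $J_{\mu,T}(v)<0$. The natural device is the $L^2$-preserving dilation: fix any $u\in S(a_1)$ with $\|u\|_q\neq 0$ (for instance a smooth compactly supported function), and for $s>0$ set $u_s(x):=s^{N/2}u(sx)$. A direct change of variables gives $\|u_s\|_2=\|u\|_2=a_1$, so $u_s\in S(a_1)$ for every $s>0$, together with the scalings $\|\nabla u_s\|_2^2=s^2\|\nabla u\|_2^2$, $\|u_s\|_q^q=s^{q\gamma_q}\|u\|_q^q$, and $\|u_s\|_p^p=s^{p\gamma_p}\|u\|_p^p$, using $t\gamma_t=N(t-2)/2$.

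The key structural fact is the ordering of exponents recorded just before (\ref{e2.11}): because $q$ is $L^2$-subcritical and $p$ is $L^2$-supercritical, one has $q\gamma_q<2<p\gamma_p$. First I would observe that, since $\|\nabla u_s\|_2=s\|\nabla u\|_2\to 0$ as $s\to 0^+$, there is $s_0>0$ such that $\|\nabla u_s\|_2\leq R_0$ for all $s\in(0,s_0)$; on this range the truncation is inactive, i.e. $\tau(\|\nabla u_s\|_2)=1$ by (\ref{e2.7}). Consequently, for $s\in(0,s_0)$,
\begin{equation*}
J_{\mu,T}(u_s)=\frac{1}{2}s^2\|\nabla u\|_2^2-\frac{\mu}{q}s^{q\gamma_q}\|u\|_q^q-\frac{\eta}{p}s^{p\gamma_p}\|u\|_p^p.
\end{equation*}

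Dividing by the dominant small-$s$ factor $s^{q\gamma_q}$ gives
\begin{equation*}
\frac{J_{\mu,T}(u_s)}{s^{q\gamma_q}}=\frac{1}{2}s^{2-q\gamma_q}\|\nabla u\|_2^2-\frac{\mu}{q}\|u\|_q^q-\frac{\eta}{p}s^{p\gamma_p-q\gamma_q}\|u\|_p^p.
\end{equation*}
Since $2-q\gamma_q>0$ and $p\gamma_p-q\gamma_q>0$, both the first and third terms tend to $0$ as $s\to 0^+$, so the right-hand side converges to $-\frac{\mu}{q}\|u\|_q^q<0$. Hence $J_{\mu,T}(u_s)<0$ for all sufficiently small $s$, which forces $\Upsilon_{\mu,T,a_1}\leq J_{\mu,T}(u_s)<0$.

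There is no serious obstacle in this argument; it is essentially a one-parameter test-function computation. The only two points requiring care are making sure the dilation is chosen small enough that the truncation factor $\tau$ equals $1$ (so that the functional coincides with the untruncated $J_\mu$ on the relevant range), and correctly identifying $s^{q\gamma_q}$ as the slowest-decaying term, which is exactly where the $L^2$-subcriticality of $q$ enters. Note also that the argument is uniform in $\mu\in(0,h_{\text{max}}]$ and $a_1\in(0,a]$, since only the signs and the exponent ordering—not the precise constants—are used.
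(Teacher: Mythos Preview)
Your argument is correct and follows essentially the same route as the paper: both use the $L^2$-preserving dilation $u_s(x)=s^{N/2}u(sx)$ and exploit $q\gamma_q<2$ so that the $L^q$-term dominates as $s\to 0^+$. The only cosmetic difference is that the paper, rather than arguing $\tau(\|\nabla u_s\|_2)=1$ for small $s$, simply uses $\tau\geq 0$ to drop the (non-positive) $p$-term outright and bound $J_{\mu,T}(u_s)\leq \tfrac{1}{2}s^2\|\nabla u\|_2^2-\tfrac{\mu}{q}s^{q\gamma_q}\|u\|_q^q$, which is slightly quicker but not materially different.
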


\begin{proof}
Fix $u\in S(a_1)$. For $t>0$, we define
$u_t(x)=t^{\frac{N}{2}}u(tx)$. Then $u_t\in S(a_1)$ for all $t>0$.
By $\tau\geq 0$ and $q\gamma_q<2$, we obtain that
\begin{equation*}
\begin{split}
J_{\mu,T}(u_t)&\leq \frac{1}{2}\int_{\mathbb{R}^N}|\nabla
u_t|^2dx-\frac{\mu}{q}\int_{\mathbb{R}^N}|u_t|^{q}dx\\
&=\frac{1}{2}t^2\int_{\mathbb{R}^N}|\nabla
u|^2dx-\frac{\mu}{q}t^{q\gamma_q}\int_{\mathbb{R}^N}|u|^{q}dx\\
&<0
\end{split}
\end{equation*}
for $t>0$ small enough. Thus $\Upsilon_{\mu,T,a_1}<0$. The proof is
complete.
\end{proof}

\begin{lemma}\label{lem3.1}
Let $N, a,\eta,p,q$ be as in (\ref{e1.1}), (\ref{e2.1}) hold,
$\mu\in (0,h_{\text{max}}]$. Then\\
(1) $J_{\mu,T}\in C^1(H^1(\mathbb{R}^N),\mathbb{R})$;\\
(2) Let
$a_1\in (0,a]$. If $u\in S(a_1)$ such that $J_{\mu,T}(u)<0$, then
$\|\nabla u\|_2<R_0$ and $J_{\mu,T}(v) = J_{\mu} (v)$ for all $v$
satisfying $\|v\|_2\leq a$ and being in a small neighborhood of $u$
in $H^1(\mathbb{R}^N)$.
\end{lemma}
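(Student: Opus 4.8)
The plan is to handle the two assertions separately, treating the smoothness of the truncation factor as the only nontrivial ingredient.

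For part (1), the first two terms of $J_{\mu,T}$ in (\ref{e2.6}) are the kinetic term $\frac{1}{2}\|\nabla u\|_2^2$ and the Nemytskii functional $u\mapsto \int_{\mathbb{R}^N}|u|^q\,dx$, both of which are $C^1$ on $H^1(\mathbb{R}^N)$ by the Sobolev embeddings $H^1\hookrightarrow L^q,L^p$ (valid for $2<q<2^*$ and $2<p\le 2^*$); this is exactly the regularity already invoked for $E_\epsilon$ after (\ref{e1.2}). The only point requiring care is the truncation term $\tau(\|\nabla u\|_2)\int_{\mathbb{R}^N}|u|^p\,dx$, since $u\mapsto\|\nabla u\|_2$ fails to be differentiable at $u=0$. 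I would circumvent this by writing the cut-off factor as $\Phi(u):=\tau(\|\nabla u\|_2)=\psi(\|\nabla u\|_2^2)$, where $\psi(s):=\tau(\sqrt{s})$. Since $\tau\equiv 1$ on $[0,R_0]$ by (\ref{e2.7}), one has $\psi\equiv 1$ on $[0,R_0^2]$, so the singularity of $\sqrt{\cdot}$ at the origin is harmless and $\psi\in C^1([0,+\infty))$; as $u\mapsto\|\nabla u\|_2^2$ is a smooth quadratic form on $H^1(\mathbb{R}^N)$, the composition $\Phi$ is $C^1$. Multiplying by the $C^1$ functional $u\mapsto\int_{\mathbb{R}^N}|u|^p\,dx$ then shows that the whole term is $C^1$, proving (1).

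For part (2), I would first show $\|\nabla u\|_2<R_0$. Arguing exactly as in the proof of Lemma \ref{lem3.2}, for $u\in S(a_1)$ with $a_1\le a$ the estimate (\ref{e2.8}) together with the monotonicity in $a_1$ of the powers $a_1^{q(1-\gamma_q)}$ and $a_1^{p(1-\gamma_p)}$ (using $1-\gamma_q>0$ and $1-\gamma_p\ge0$) yields $J_{\mu,T}(u)\ge \bar{g}_a(\|\nabla u\|_2)$. By (\ref{e2.12}) and (\ref{e2.11}) we have $\bar{g}_a(R_0)=g_a(R_0)=0$ and $\bar{g}_a$ is strictly increasing on $(R_0,+\infty)$, so $\bar{g}_a(r)\ge0$ whenever $r\ge R_0$. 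Hence if $\|\nabla u\|_2\ge R_0$ we would get $J_{\mu,T}(u)\ge0$, contradicting $J_{\mu,T}(u)<0$; therefore $\|\nabla u\|_2<R_0$.

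Finally, the coincidence $J_{\mu,T}=J_\mu$ near $u$ follows from continuity: since $\|\nabla u\|_2<R_0$ and $v\mapsto\|\nabla v\|_2$ is continuous on $H^1(\mathbb{R}^N)$, there is a neighborhood of $u$ on which $\|\nabla v\|_2<R_0$, whence $\tau(\|\nabla v\|_2)=1$ by (\ref{e2.7}) and the truncated term equals the untruncated one, giving $J_{\mu,T}(v)=J_\mu(v)$ upon comparing (\ref{e2.6}) with (\ref{e2.5}). I note that this last equality depends only on $\|\nabla v\|_2$, so the hypothesis $\|v\|_2\le a$ is not actually needed here and is retained only for the later application. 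The one genuinely delicate step is the $C^1$ regularity of the truncation factor at $u=0$ in part (1); the reformulation through $\psi(\|\nabla u\|_2^2)$ is what resolves it, and everything else is a direct consequence of the already-established properties (\ref{e2.8}), (\ref{e2.11}) and (\ref{e2.12}) of $\bar{g}_a$.
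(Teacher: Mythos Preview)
Your proof is correct. Part (1) is treated more carefully than in the paper (which simply calls it ``trivial''); your reformulation $\tau(\|\nabla u\|_2)=\psi(\|\nabla u\|_2^2)$ to handle the non-differentiability of the norm at the origin is the right way to make this precise. For part (2), your derivation of $\|\nabla u\|_2<R_0$ is identical to the paper's. The only difference is in the last step: the paper argues that $J_{\mu,T}(v)<0$ for $v$ near $u$ by continuity of $J_{\mu,T}$, and then reapplies the inequality $J_{\mu,T}(v)\ge\bar g_a(\|\nabla v\|_2)$ (which requires $\|v\|_2\le a$) to conclude $\|\nabla v\|_2<R_0$; you instead invoke continuity of $v\mapsto\|\nabla v\|_2$ directly. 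Your route is shorter and, as you observe, renders the hypothesis $\|v\|_2\le a$ superfluous for this particular conclusion; the paper's route explains why that hypothesis appears in the statement.
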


\begin{proof}
(1) is trivial. Now we prove (2). It follows from $J_{\mu,T}(u)<0$
and $$J_{\mu,T}(u)\geq \bar{g}_{a_1}(\|\nabla u\|_2)\geq
\bar{g}_{a}(\|\nabla u\|_2)$$ that $\bar{g}_a(\|\nabla u\|_2)<0$,
which implies that $\|\nabla u\|_2<R_0$ by (\ref{e2.12}). By (1) and
$J_{\mu,T}(u)<0$, we obtain that $J_{\mu,T}(v)<0$ for all $v$ in a
small neighborhood of $u$ in $H^1 (\mathbb{R}^N)$, which combined
with $\|v\|_2\leq a$ gives that  $\|\nabla v\|_2<R_0$ and thus
$J_{\mu,T}(v) = J_{\mu} (v)$. The proof is complete.
\end{proof}

For any $a_1\in (0,a]$, we define
\begin{equation*}
m_{\mu}(a_1):=\inf_{u\in V(a_1)}J_\mu(u),\ V(a_1):=\{u\in
S(a_1):\|\nabla u\|_2<R_0\}.
\end{equation*}
Since $J_{\mu,T}(u)\geq \bar{g}_{a_1}(\|\nabla u\|_2)\geq
\bar{g}_{a}(\|\nabla u\|_2)$ for any $u\in S(a_1)$, by Lemmas
\ref{lem3.2}-\ref{lem3.1}, we obtain that
\begin{equation}\label{e3.2}
\Upsilon_{\mu,T,a_1}=\inf_{u\in S(a_1)}J_{\mu,T}(u)=m_{\mu}(a_1).
\end{equation}
In (\cite{JEANJEAN-JENDREJ}, Lemma 2.6 and Theorem 1.2), the authors
obtained that

\begin{lemma}\label{lem3.3}
Let $N, a,\eta,p,q$ be as in (\ref{e1.1}), (\ref{e2.1}) hold, $\mu\in (0,h_{\text{max}}]$. Then \\
(1) $a_1\in (0,a]\mapsto m_{\mu}(a_1)$ is continuous;\\
(2) Let $0<a_1<a_2\leq a$, then
$\frac{a_1^2}{a_2^2}m_{\mu}(a_2)<m_{\mu}(a_1)<0$.
\end{lemma}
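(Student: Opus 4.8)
The plan is to prove part (2) in the equivalent form that the rescaled map $a\mapsto m_\mu(a)/a^2$ is strictly decreasing on $(0,a]$, and to deduce the continuity in part (1) from the same dilation estimates. Two ingredients will be used throughout. The first is the elementary dilation identity: for $u\in S(b)\setminus\{0\}$ and $\theta>0$, one computes from (\ref{e2.5}) that $J_\mu(\theta u)-\theta^2 J_\mu(u)=\frac{\mu}{q}(\theta^2-\theta^q)\|u\|_q^q+\frac{\eta}{p}(\theta^2-\theta^p)\|u\|_p^p$, so that $J_\mu(\theta u)>\theta^2 J_\mu(u)$ when $0<\theta<1$ and $J_\mu(\theta u)<\theta^2 J_\mu(u)$ when $\theta>1$, the strictness coming from $2<q<p$. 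The second is an interior a priori bound: on $V(a_1)$ we have $J_\mu(v)=J_{\mu,T}(v)\geq \bar g_{a_1}(\|\nabla v\|_2)\geq \bar g_a(\|\nabla v\|_2)=g_a(\|\nabla v\|_2)$ by (\ref{e2.12}); since $g_a<0$ on $(0,R_0)$ and $g_a(R_0)=0$ by (\ref{e2.11}), and since $m_\mu(a_1)<0$ by Lemma \ref{lem3.4} together with (\ref{e3.2}), any minimizing sequence stays in a region $\|\nabla v\|_2\leq R_0-\delta$ with $\delta>0$ depending only on an upper bound for $m_\mu(a_1)$.

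For part (1), I would prove upper and lower semicontinuity at each $a_0\in(0,a]$ by dilating near-minimizers. For upper semicontinuity, fix $\varepsilon>0$ and pick $u\in V(a_0)$ with $J_\mu(u)\leq m_\mu(a_0)+\varepsilon$; the interior bound gives $\|\nabla u\|_2\leq R_0-\delta$, so for $a_n\to a_0$ the dilations $u_n:=(a_n/a_0)u\in S(a_n)$ satisfy $\|\nabla u_n\|_2=(a_n/a_0)\|\nabla u\|_2<R_0$ for $n$ large, hence $u_n\in V(a_n)$ and $m_\mu(a_n)\leq J_\mu(u_n)\to J_\mu(u)$; letting $\varepsilon\to0$ gives $\limsup_n m_\mu(a_n)\leq m_\mu(a_0)$. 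For lower semicontinuity, the upper bound just obtained makes the interior bound uniform for $a_n$ near $a_0$; taking $u_n\in V(a_n)$ with $J_\mu(u_n)\leq m_\mu(a_n)+\varepsilon$ and $\|\nabla u_n\|_2\leq R_0-\delta$, the dilations $\bar u_n:=(a_0/a_n)u_n\in V(a_0)$ are admissible for large $n$ and yield $m_\mu(a_0)\leq J_\mu(\bar u_n)=J_\mu(u_n)+o(1)\leq m_\mu(a_n)+\varepsilon+o(1)$, whence $m_\mu(a_0)\leq\liminf_n m_\mu(a_n)$. Continuity follows.

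For part (2), the inequality $m_\mu(a_1)<0$ is Lemma \ref{lem3.4}. For the left inequality I would first establish local strict decrease of $\phi(a):=m_\mu(a)/a^2$: given $a_1$ with $a_2>a_1$ and $a_2/a_1$ close to $1$, dilate an interior near-minimizer $v$ of $m_\mu(a_1)$ up to $v':=(a_2/a_1)v\in S(a_2)$, which lies in $V(a_2)$ because $(a_2/a_1)(R_0-\delta)<R_0$ for $a_2/a_1$ near $1$. The dilation identity with $\theta=a_2/a_1>1$ gives $m_\mu(a_2)\leq J_\mu(v')=(a_2/a_1)^2 J_\mu(v)-G(v)$ with $G(v)=\frac{\mu}{q}(\theta^q-\theta^2)\|v\|_q^q+\frac{\eta}{p}(\theta^p-\theta^2)\|v\|_p^p>0$. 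The nonlinear part of $J_\mu(v)$ equals $\frac12\|\nabla v\|_2^2-J_\mu(v)\geq -J_\mu(v)\to -m_\mu(a_1)>0$, so $G(v)$ is bounded below by a positive constant along the minimizing sequence; letting $\varepsilon\to0$ then yields $m_\mu(a_2)<(a_2/a_1)^2 m_\mu(a_1)$, i.e.\ $\phi(a_2)<\phi(a_1)$, for $a_2-a_1$ small. Since $\phi$ is continuous by part (1), right-local strict decrease at every point upgrades to global strict decrease on $(0,a]$ (if some $a_1<a_2$ had $\phi(a_1)\leq\phi(a_2)$, the point $c=\sup\{x\in[a_1,a_2]:\phi(x)\leq\phi(a_1)\}$ would contradict local decrease at $c$), which is exactly $\frac{a_1^2}{a_2^2}m_\mu(a_2)<m_\mu(a_1)$.

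The main obstacle is the interaction between the mass dilation and the hard gradient constraint $\|\nabla u\|_2<R_0$ built into $V(a_1)$. Only dilating \emph{up} produces the lower bound on $m_\mu(a_1)$ that part (2) requires, yet dilating up can push $\|\nabla u\|_2$ past $R_0$ and out of admissibility, so the strict scaling inequality cannot be obtained in one step for arbitrary mass ratios. The interior estimate $\|\nabla v\|_2\leq R_0-\delta$ for near-minimizers is the device that keeps the dilation admissible for nearby masses, and the globalization to all $a_1<a_2$ is then forced by the continuity of part (1). A secondary technical point is that the strict per-element inequality must survive passage to the infimum; this is handled by the uniform positive lower bound on the nonlinear part $\frac{\mu}{q}\|v\|_q^q+\frac{\eta}{p}\|v\|_p^p=\frac12\|\nabla v\|_2^2-J_\mu(v)$ along minimizing sequences, which makes the gap $G(v)$ uniformly positive.
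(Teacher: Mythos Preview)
The paper does not prove Lemma~\ref{lem3.3} at all: it simply quotes the result from \cite{JEANJEAN-JENDREJ} (their Lemma~2.6 and Theorem~1.2), so there is no in-paper argument to compare against. Your proposal therefore supplies a proof the paper omits, and the approach you take---mass dilation $u\mapsto\theta u$ together with the interior gradient bound coming from $g_a(R_0)=0$ and $m_\mu(a_1)<0$---is the natural one and is essentially correct.

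Two small points are worth tightening. First, your globalization sketch (``the point $c=\sup\{x\in[a_1,a_2]:\phi(x)\leq\phi(a_1)\}$ would contradict local decrease at $c$'') is not quite right when $c=a_2$, since right-local decrease at $a_2$ concerns points outside $[a_1,a_2]$. A clean replacement: the minimum of the continuous function $\phi$ on $[a_1,a_2]$ must be attained at $a_2$ (any $c<a_2$ attaining it would violate right-local decrease at $c$); if in addition $\phi(a_2)\geq\phi(a_1)$ then $\phi(a_1)$ is also the minimum, and right-local decrease at $a_1$ gives a contradiction. Second, to make the strict inequality survive the passage $\varepsilon\to0$, you should state explicitly that $G(v)\geq(\theta^q-\theta^2)\bigl(\tfrac{\mu}{q}\|v\|_q^q+\tfrac{\eta}{p}\|v\|_p^p\bigr)\geq(\theta^q-\theta^2)\bigl(-m_\mu(a_1)-\varepsilon\bigr)$, which for $\varepsilon<-m_\mu(a_1)/2$ gives a uniform positive lower bound; you have the ingredients but the chain of inequalities is left implicit. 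With these cosmetic fixes your argument is complete.
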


Consequently, by (\ref{e3.2}) and Lemma \ref{lem3.3}, we obtain that

\begin{lemma}\label{lem3.5}
Let $N, a,\eta,p,q$ be as in (\ref{e1.1}), (\ref{e2.1}) hold, $\mu\in (0,h_{\text{max}}]$. Then \\
(1) $a_1\in (0,a]\mapsto \Upsilon_{\mu,T,a_1}$ is continuous;\\
(2) Let $0<a_1<a_2\leq a$, then
$\frac{a_1^2}{a_2^2}\Upsilon_{\mu,T,a_2}<\Upsilon_{\mu,T,a_1}<0$.
\end{lemma}

The next  compactness lemma  is  useful in the study of the
autonomous problem as well as in the non-autonomous problem.

\begin{lemma}\label{lem3.6}
Let $N, a,\eta,p,q$ be as in (\ref{e1.1}), (\ref{e2.1}) hold,
$\mu\in (0,h_{\text{max}}]$, $a_1\in(0,a]$. $\{u_n\}\subset S(a_1)$
be a minimizing sequence with respect to $\Upsilon_{\mu,T,a_1}$.
Then, for
some subsequence, either\\
(i) $\{u_n\}$ is strongly convergent,\\
  or\\
(ii) There exists $\{y_n\}\subset \mathbb{R}^N$ with
$|y_n|\to\infty$ such that the sequence $v_n(x) = u_n(x + y_n)$ is
strongly convergent to a function $v\in S(a_1)$ with
$J_{\mu,T}(v)=\Upsilon_{\mu,T,a_1}$.
\end{lemma}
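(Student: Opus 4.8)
The plan is to prove this concentration-compactness style dichotomy for the minimizing sequence $\{u_n\}$ by adapting Lions' method to the truncated functional. First I would record the basic structure of the sequence. Since $\{u_n\}\subset S(a_1)$ is minimizing for $\Upsilon_{\mu,T,a_1}$, we have $J_{\mu,T}(u_n)\to\Upsilon_{\mu,T,a_1}<0$ (the negativity comes from Lemma~\ref{lem3.4}). By the lower bound $J_{\mu,T}(u_n)\geq\bar g_{a_1}(\|\nabla u_n\|_2)\geq\bar g_a(\|\nabla u_n\|_2)$ together with the coercive behaviour of $\bar g_a$ recorded in (\ref{e2.12}) and (\ref{e2.8}), the sequence $\{\|\nabla u_n\|_2\}$ is bounded, so $\{u_n\}$ is bounded in $H^1(\mathbb{R}^N)$. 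Moreover, since $J_{\mu,T}(u_n)<0$ for large $n$, the argument in Lemma~\ref{lem3.1}(2) forces $\|\nabla u_n\|_2<R_0$ eventually; hence on the tail of the sequence the truncation is inactive, $\tau(\|\nabla u_n\|_2)=1$, and $J_{\mu,T}(u_n)=J_\mu(u_n)$. This is the key simplification: the minimizing sequence effectively lives in the region $V(a_1)$ where the truncated functional coincides with the genuine functional $J_\mu$, and by (\ref{e3.2}) the infimum equals $m_\mu(a_1)$.

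Next I would invoke the standard Lions concentration-compactness lemma applied to the mass densities $\rho_n=|u_n|^2$, which have total mass $a_1^2$. This splits into the three alternatives: vanishing, dichotomy, and compactness (up to translation). The bulk of the work is ruling out vanishing and dichotomy. To exclude \textbf{vanishing}, suppose $\sup_{y\in\mathbb{R}^N}\int_{B_R(y)}|u_n|^2\,dx\to 0$ for every $R>0$. By the Lions vanishing lemma this forces $u_n\to 0$ strongly in $L^t(\mathbb{R}^N)$ for all $2<t<2^*$, in particular in $L^q$; combined with boundedness in $L^{2^*}$ and the truncation keeping the $L^p$ term controlled, one gets $\liminf J_\mu(u_n)\geq 0$, contradicting $\Upsilon_{\mu,T,a_1}<0$. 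To exclude \textbf{dichotomy}, the essential tool is the strict subadditivity inequality $\frac{a_1^2}{a_2^2}\Upsilon_{\mu,T,a_2}<\Upsilon_{\mu,T,a_1}$ from Lemma~\ref{lem3.5}(2), which is exactly the scaling-strict-monotonicity estimate that defeats splitting of mass. The standard argument: if mass $\alpha\in(0,a_1^2)$ separates off, one constructs cut-off pieces carrying masses $\alpha$ and $a_1^2-\alpha$ (with negligible interaction energy) and derives $\Upsilon_{\mu,T,a_1}\geq\Upsilon_{\mu,T,\sqrt\alpha}+\Upsilon_{\mu,T,\sqrt{a_1^2-\alpha}}$, which contradicts the strict subadditivity guaranteed by Lemma~\ref{lem3.5}. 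Hence only compactness survives.

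Finally I would convert compactness into the stated alternatives (i) and (ii). Concentration-compactness gives a sequence $\{y_n\}\subset\mathbb{R}^N$ such that $v_n(x)=u_n(x+y_n)$ is tight, so up to a subsequence $v_n\rightharpoonup v$ weakly in $H^1$ with $\|v\|_2=a_1$, i.e.\ $v\in S(a_1)$ with no mass lost. Weak lower semicontinuity of the gradient term and strong $L^q$, $L^p$ convergence on the tight sequence (using $\|\nabla v_n\|_2<R_0$, so again $\tau\equiv 1$ and $J_{\mu,T}(v_n)=J_\mu(v_n)$) yield $J_{\mu,T}(v)\leq\liminf J_{\mu,T}(v_n)=\Upsilon_{\mu,T,a_1}$, and since $v\in S(a_1)$ the reverse inequality is automatic, so $J_{\mu,T}(v)=\Upsilon_{\mu,T,a_1}$ and in fact $\|\nabla v_n\|_2\to\|\nabla v\|_2$, giving $v_n\to v$ strongly. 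If $\{y_n\}$ is bounded, one absorbs it and obtains case (i), $u_n\to v$ strongly after translation by a fixed amount; if $|y_n|\to\infty$ along the subsequence, this is exactly case (ii).

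The main obstacle I expect is the dichotomy step: one must carefully localize $u_n$ via smooth cut-offs $\varphi(\cdot/R)$ and $(1-\varphi(\cdot/R))$, control the normalization of the resulting pieces (whose $L^2$-masses are only approximately $\alpha$ and $a_1^2-\alpha$, requiring a rescaling onto the correct spheres), and bound the cross terms in the nonlinear and gradient energies so that they vanish as $R,n\to\infty$. The presence of the truncation $\tau(\|\nabla\cdot\|_2)$ adds a subtlety, since the cut-off pieces may have gradient norms pushed toward or beyond $R_0$; this is where I would lean on the observation that the minimizing sequence already satisfies $\|\nabla u_n\|_2<R_0$, so the dichotomy analysis can be performed directly on $J_\mu$ rather than $J_{\mu,T}$, after which the strict subadditivity of Lemma~\ref{lem3.5} delivers the contradiction cleanly.
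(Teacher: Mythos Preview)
Your overall strategy---running Lions' vanishing/dichotomy/compactness trichotomy on the mass densities and eliminating the first two via $\Upsilon_{\mu,T,a_1}<0$ and the strict subadditivity of Lemma~\ref{lem3.5}---is sound, but it differs from the paper's route. The paper never invokes the trichotomy: it passes directly to a weak limit $u$ of $\{u_n\}$ and splits into the cases $\|u\|_2=a_1$, $\|u\|_2\in(0,a_1)$, and $u\equiv 0$. The intermediate-mass case is ruled out by a Br\'ezis--Lieb splitting $J_{\mu,T}(u_n)\geq J_{\mu,T}(u_n-u)+J_{\mu,T}(u)+o_n(1)$ combined with Lemma~\ref{lem3.5}, which plays the role of your dichotomy step but needs no cut-off construction; the zero-mass case triggers a translation and reduces to the other two. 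Your approach is the classical one; the paper's is shorter precisely because it sidesteps the localisation estimates you flag as the main obstacle.

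There is, however, a genuine gap in your sketch when $p=2^*$. In the compactness step you assert ``strong $L^q$, $L^p$ convergence on the tight sequence'', but tightness in $L^2$ plus an $H^1$ bound gives strong $L^t$ convergence only for $t<2^*$, so the lower-semicontinuity inequality $J_{\mu,T}(v)\leq\liminf J_{\mu,T}(v_n)$ does not follow: the term $-\frac{\eta}{2^*}\|v_n\|_{2^*}^{2^*}$ may leak. The same issue appears in your vanishing step, where $u_n\to 0$ in $L^q$ does not force the $L^{2^*}$ term to vanish, and ``boundedness in $L^{2^*}$ and the truncation keeping the $L^p$ term controlled'' is not an argument. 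The repair in both places is the estimate the paper isolates in (\ref{e3.3}): since the relevant functions satisfy $\|\nabla w\|_2<R_0$, Sobolev and the identity $w_a(R_0)=0$ give
\[
\frac{1}{2}\|\nabla w\|_2^2-\frac{\eta}{2^*S^{2^*/2}}\|\nabla w\|_2^{2^*}\;\geq\;\frac{h_{\max}C_{N,q}^q a^{q(1-\gamma_q)}}{q}\,R_0^{q\gamma_q-2}\,\|\nabla w\|_2^2\;>\;0.
\]
Applied to $w=u_n$ (vanishing) or to $w=v_n-v$ after a Br\'ezis--Lieb split (compactness), this supplies the missing positivity and forces $\|\nabla(v_n-v)\|_2\to 0$. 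With this insertion your argument goes through.
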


\begin{proof}
Noting that $\|\nabla u_n\|_2<R_0$ for $n$ large enough, there
exists $u\in H^1(\mathbb{R}^N)$ such that $u_n\rightharpoonup u$ in
$H^1(\mathbb{R}^N)$ for some subsequence. Now we consider the
following three possibilities.

(1) If $u\not\equiv0$ and $\|u\|_2=b\neq a_1$, we must have $b\in
(0,a_1)$. Setting $v_n=u_n-u$, $d_n=\|v_n\|_2$, and by using
\begin{equation*}
\|u_n\|_2^2=\|v_n\|_2^2+\|u\|_2^2+o_n(1),
\end{equation*}
we obtain that $\|v_n\|_2\to d$, where $a_1^2=d^2+b^2$. Noting that
$d_n\in (0,a_1)$ for $n$ large enough, and using the
Br\'{e}zis-Lieb Lemma (see \cite{Willem 1996}), Lemma \ref{lem3.5},
$\|\nabla u_n\|_2^2=\|\nabla u\|_2^2+\|\nabla v_n\|_2^2+o_n(1)$,
$\|\nabla u\|_2^2\leq \liminf_{n\to+\infty}\|\nabla u_n\|_2^2$,
$\tau$ is continuous and non-increasing, we obtain that
\begin{equation*}
\begin{split}
\Upsilon_{\mu,T,a_1}+o_n(1)=J_{\mu,T}(u_n)&=\frac{1}{2}\|\nabla
v_n\|_2^2-\frac{\mu}{q}\|v_n\|_q^q-\frac{\eta}{p}\tau(\|\nabla
u_n\|_2)\|v_n\|_p^p\\
&\quad + \frac{1}{2}\|\nabla
u\|_2^2-\frac{\mu}{q}\|u\|_q^q-\frac{\eta}{p}\tau(\|\nabla
u_n\|_2)\|u\|_p^p+o_n(1)\\
&\geq
J_{\mu,T}(v_n)+J_{\mu,T}(u)+o_n(1)\\
&\geq \Upsilon_{\mu,T,d_n}+\Upsilon_{\mu,T,b}+o_n(1)\\
&\geq
\frac{d_n^2}{a_1^2}\Upsilon_{\mu,T,a_1}+\Upsilon_{\mu,T,b}+o_n(1).
\end{split}
\end{equation*}
Letting $n\to+\infty$, we find that
\begin{equation*}
\begin{split}
\Upsilon_{\mu,T,a_1}&\geq
\frac{d^2}{a_1^2}\Upsilon_{\mu,T,a_1}+\Upsilon_{\mu,T,b}\\
&>
\frac{d^2}{a_1^2}\Upsilon_{\mu,T,a_1}+\frac{b^2}{a_1^2}\Upsilon_{\mu,T,a_1}
=\Upsilon_{\mu,T,a_1},
\end{split}
\end{equation*}
which is a contradiction. So this possibility can not exist.

(2) If $\|u\|_2=a_1$, then  $u_n\to u$  in $L^2(\mathbb{R}^N)$ and
thus $u_n\to u$  in $L^t(\mathbb{R}^N)$ for all $t\in (2,2^*)$.

Case $p<2^*$, then
\begin{equation*}
\begin{split}
\Upsilon_{\mu,T,a_1}&=\lim_{n\to+\infty}J_{\mu,T}(u_n)\\
&= \lim_{n\to+\infty}\left(\frac{1}{2}\|\nabla
u_n\|_2^2-\frac{\mu}{q}\|u_n\|_q^q-\frac{\eta}{p}\tau(\|\nabla
u_n\|_2)\|u_n\|_p^p\right)\\
&\geq J_{\mu,T}(u).
\end{split}
\end{equation*}
As $u\in S(a_1)$, we infer that $J_{\mu,T}(u)=\Upsilon_{\mu,T,a_1}$,
then $\|\nabla u_n\|_2\to \|\nabla u\|_2$ and thus $u_n\to u$ in
$H^1(\mathbb{R}^N)$, which implies that (i) occurs.

Case $p=2^*$, noting that $\|\nabla v_n\|_2\leq \|\nabla
u_n\|_2<R_0$ for $n$ large enough, and using the Soblev inequality,
we have
\begin{equation}\label{e3.3}
\begin{split}
J_{\mu,T}(v_n)=J_{\mu}(v_n)&=\frac{1}{2}\int_{\mathbb{R}^N}|\nabla
v_n|^2dx-\frac{\mu}{q}\int_{\mathbb{R}^N}|v_n|^{q}dx
-\frac{\eta}{p}\int_{\mathbb{R}^N}|v_n|^{p}dx\\
&\geq \frac{1}{2}\|\nabla
v_n\|_2^2-\frac{\eta}{2^*}\frac{1}{S^{\frac{2^*}{2}}}\|\nabla
v_n\|_2^{2^*}+o_n(1)\\
&=\|\nabla
v_n\|_2^2\left(\frac{1}{2}-\frac{\eta}{2^*}\frac{1}{S^{\frac{2^*}{2}}}\|\nabla
v_n\|_2^{2^*-2}\right)+o_n(1)\\
&\geq \|\nabla
v_n\|_2^2\left(\frac{1}{2}-\frac{\eta}{2^*}\frac{1}{S^{\frac{2^*}{2}}}R_0^{2^*-2}\right)
+o_n(1)\\
&= \|\nabla
v_n\|_2^2\frac{1}{q}h_{\text{max}}C_{N,q}^qa^{q(1-\gamma_q)}R_0^{q\gamma_q-2}
+o_n(1),
\end{split}
\end{equation}
because
$w_a(R_0)=\frac{1}{2}-\frac{\eta}{2^*}\frac{1}{S^{\frac{2^*}{2}}}R_0^{2^*-2}
-\frac{1}{q}h_{\text{max}}C_{N,q}^qa^{q(1-\gamma_q)}R_0^{q\gamma_q-2}=0$.
Now we remember that
\begin{equation}\label{e3.4}
\Upsilon_{\mu,T,a_1}\leftarrow J_{\mu,T}(u_n)\geq
J_{\mu,T}(v_n)+J_{\mu,T}(u)+o_n(1).
\end{equation}
Since $u\in S(a_1)$, we have  $J_{\mu,T}(u)\geq
\Upsilon_{\mu,T,a_1}$, which combined with (\ref{e3.3}) and
(\ref{e3.4}) gives that $\|\nabla v_n\|_2^2\to 0$ and then $u_n\to
u$ in $H^1(\mathbb{R}^N)$. This implies that (i) occurs.

(3) If $u\equiv 0$, that is, $u_n\rightharpoonup 0$ in
$H^1(\mathbb{R}^N)$. We claim that there exist $R, \beta>0$ and
$\{y_n\}\subset \mathbb{R}^N$ such that
\begin{equation}\label{e3.5}
\int_{B_R(y_n)}|u_n|^2dx\geq \beta,\ \text{for\ all\ }n.
\end{equation}
Indeed, otherwise we must have $u_n\to 0$ in $L^t(\mathbb{R}^N)$ for
all $t\in(2, 2^*)$. Thus, for $p<2^*$,  $J_{\mu,T}(u_n)\geq
\frac{1}{2}\|\nabla u_n\|_2^2+o_n(1)$, which contradicts
$J_{\mu,T}(u_n)\to \Upsilon_{\mu,T,a_1}<0$. For $p=2^*$, similarly
to (\ref{e3.3}), we obtain that
\begin{equation*}
J_{\mu,T}(u_n)\geq
\|\nabla
u_n\|_2^2\frac{1}{q}h_{\text{max}}C_{N,q}^qa^{q(1-\gamma_q)}R_0^{q\gamma_q-2}
+o_n(1).
\end{equation*}
We also get a contradiction in this case. Hence, in all cases,
(\ref{e3.5}) holds and $|y_n|\to +\infty$ obviously. From this,
considering $\bar{u}_n(x) = u_n(x+y_n)$, clearly
$\{\bar{u}_n\}\subset S(a_1)$ and it is also a minimizing sequence
with respect to $\Upsilon_{\mu,T,a_1}$. Moreover, there exists
$\bar{u}\in H^1(\mathbb{R}^N)\backslash\{0\}$ such that
$\bar{u}_n\rightharpoonup \bar{u}$ in $H^1(\mathbb{R}^N)$. Following
as in the first two possibilities of the proof, we derive that
$\bar{u}_n\to\bar{u}$ in $H^1(\mathbb{R}^N)$, which implies that
(ii) occurs. This proves the lemma.
\end{proof}

\begin{lemma}\label{lem3.7}
Let $N, a,\eta,p,q$ be as in (\ref{e1.1}), $\mu\in
(0,h_{\text{max}}]$, $a_1\in(0,a]$, (\ref{e2.1}) hold. Then
$\Upsilon_{\mu,T,a_1}$ is attained.
\end{lemma}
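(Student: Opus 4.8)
The plan is to obtain the result as an essentially immediate consequence of the compactness Lemma~\ref{lem3.6}, which has been tailored precisely for this purpose. First I would fix a minimizing sequence $\{u_n\}\subset S(a_1)$ for $\Upsilon_{\mu,T,a_1}$, that is, $J_{\mu,T}(u_n)\to \Upsilon_{\mu,T,a_1}$; such a sequence exists and the infimum is a genuine real number, since $\Upsilon_{\mu,T,a_1}>-\infty$ by Lemma~\ref{lem3.2} and $\Upsilon_{\mu,T,a_1}<0$ by Lemma~\ref{lem3.4}. Applying Lemma~\ref{lem3.6} to $\{u_n\}$, after passing to a subsequence one of its two alternatives holds, and I would treat each in turn.

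In alternative (i), $u_n\to u$ strongly in $H^1(\mathbb{R}^N)$. Strong convergence preserves the $L^2$-norm, so $\|u\|_2=\lim_{n\to\infty}\|u_n\|_2=a_1$, whence $u\in S(a_1)$; and since $J_{\mu,T}\in C^1(H^1(\mathbb{R}^N),\mathbb{R})$ by Lemma~\ref{lem3.1}(1), continuity gives $J_{\mu,T}(u)=\lim_{n\to\infty}J_{\mu,T}(u_n)=\Upsilon_{\mu,T,a_1}$, so $u$ realizes the infimum. In alternative (ii), Lemma~\ref{lem3.6} already furnishes a function $v\in S(a_1)$ with $J_{\mu,T}(v)=\Upsilon_{\mu,T,a_1}$, which is directly a minimizer. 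In either case $\Upsilon_{\mu,T,a_1}$ is attained, which is exactly the claim.

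The point worth stressing is that the genuine difficulty — the failure of the embedding $H^1(\mathbb{R}^N)\hookrightarrow L^t(\mathbb{R}^N)$ to be compact, so that a minimizing sequence may split or let its mass escape to infinity — has already been dispatched inside Lemma~\ref{lem3.6}: the Br\'ezis-Lieb splitting combined with the strict subadditivity/scaling of $a_1\mapsto \Upsilon_{\mu,T,a_1}$ from Lemma~\ref{lem3.5}(2) rules out a nontrivial dichotomy, while the concentration estimate (the inequality analogous to (\ref{e3.3})) rules out vanishing. Consequently Lemma~\ref{lem3.7} carries no further obstacle and is a direct corollary of Lemma~\ref{lem3.6}; the only care needed is to confirm that the limit still lies on the constraint sphere $S(a_1)$, and this is guaranteed by strong $H^1$-convergence in both alternatives. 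I therefore expect the entire content of this lemma to reduce to the clean case split above, with no additional estimate required.
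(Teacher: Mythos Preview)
Your proposal is correct and follows essentially the same approach as the paper: take a minimizing sequence (well defined by Lemmas~\ref{lem3.2} and~\ref{lem3.4}), apply the compactness Lemma~\ref{lem3.6}, and read off a minimizer from either alternative. The paper's proof is just a terser version of what you wrote.
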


\begin{proof}
By Lemmas \ref{lem3.2}-\ref{lem3.1}, there exists a bounded
minimizing sequence $\{u_n\} \subset S(a_1)$ satisfying
$J_{\mu,T}(u_n)\to \Upsilon_{\mu,T,a_1}$ as $n\to+\infty$. Now,
applying Lemma \ref{lem3.6}, there exists $u\in S(a_1)$ such that
$J_{\mu,T}(u)= \Upsilon_{\mu,T,a_1}$. The proof is complete.
\end{proof}

An immediate consequence of Lemma \ref{lem3.7} is the following
corollary.

\begin{corollary}\label{cor3.1}
Let $N, a,\eta,p,q$ be as in (\ref{e1.1}), (\ref{e2.1}) hold. Fix
$a_1\in (0,a]$ and let $0 < \mu_1 < \mu_2\leq h_{\text{max}}$. Then
$\Upsilon_{\mu_2,T,a_1} < \Upsilon_{\mu_1,T,a_1}$.
\end{corollary}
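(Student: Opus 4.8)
The plan is to exploit the fact that, for each fixed $u$, the map $\mu \mapsto J_{\mu,T}(u)$ is affine and strictly decreasing whenever $u \not\equiv 0$, since the $\mu$-dependence enters only through the term $-\frac{\mu}{q}\|u\|_q^q$ and $\|u\|_q^q > 0$ for such $u$. Combined with the fact that the infimum $\Upsilon_{\mu_1,T,a_1}$ is actually attained (Lemma \ref{lem3.7}), this should force the strict inequality at once.

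Concretely, first I would invoke Lemma \ref{lem3.7} at the level $\mu = \mu_1$ to produce a minimizer $u_1 \in S(a_1)$ with $J_{\mu_1,T}(u_1) = \Upsilon_{\mu_1,T,a_1}$. Because $u_1 \in S(a_1)$ we have $\|u_1\|_2 = a_1 > 0$, hence $u_1 \not\equiv 0$ and therefore $\|u_1\|_q^q > 0$; this positivity is the only substantive ingredient, and it is precisely where using the attained minimizer (rather than an arbitrary minimizing sequence) pays off.

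Next I would simply compare the two functionals on this single test function $u_1$. From the definition of $J_{\mu,T}$,
\begin{equation*}
J_{\mu_2,T}(u_1) = J_{\mu_1,T}(u_1) - \frac{\mu_2 - \mu_1}{q}\|u_1\|_q^q = \Upsilon_{\mu_1,T,a_1} - \frac{\mu_2 - \mu_1}{q}\|u_1\|_q^q.
\end{equation*}
Since $\Upsilon_{\mu_2,T,a_1} = \inf_{u \in S(a_1)} J_{\mu_2,T}(u) \leq J_{\mu_2,T}(u_1)$ and the correction term $\frac{\mu_2 - \mu_1}{q}\|u_1\|_q^q$ is strictly positive (using $\mu_2 > \mu_1$ together with $\|u_1\|_q^q > 0$), I would conclude
\begin{equation*}
\Upsilon_{\mu_2,T,a_1} \leq \Upsilon_{\mu_1,T,a_1} - \frac{\mu_2 - \mu_1}{q}\|u_1\|_q^q < \Upsilon_{\mu_1,T,a_1},
\end{equation*}
which is the desired strict inequality.

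I do not anticipate a genuine obstacle: the argument is a one-line monotonicity comparison, and the truncation term $\tau(\|\nabla u_1\|_2)\|u_1\|_p^p$ plays no role since it is identical in both functionals and cancels in the difference. The only point requiring care is to choose the minimizer at the \emph{smaller} parameter $\mu_1$ as the test function, so that $J_{\mu_2,T}(u_1) - J_{\mu_1,T}(u_1)$ has the favorable sign; testing instead with a $\mu_2$-minimizer would only yield an inequality in the wrong direction.
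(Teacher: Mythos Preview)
Your proposal is correct and follows essentially the same approach as the paper: both take a minimizer $u$ at the level $\mu_1$ (via Lemma~\ref{lem3.7}), use it as a test function for $J_{\mu_2,T}$, and obtain the strict inequality from $\|u\|_q^q>0$. The paper's proof is simply the one-line version of what you wrote.
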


\begin{proof}
Let $u\in S(a_1)$ satisfying
$J_{\mu_1,T}(u)=\Upsilon_{\mu_1,T,a_1}$. Then,
$\Upsilon_{\mu_2,T,a_1}\leq J_{\mu_2,T}(u)<J_{\mu_1,T}(u) =
\Upsilon_{\mu_1,T,a_1}$.
\end{proof}

\section{Proof of Theorem \ref{thm1.1}}
\setcounter{section}{4} \setcounter{equation}{0}

In this section, we  first prove some properties of the functional
$E_{\epsilon,T}$ defined in  (\ref{e2.2}) restricted to the sphere
$S(a)$,  and then give the proof of Theorem \ref{thm1.1}.

Denote
\begin{equation*}
J_{\text{max},T}:=J_{h_{\text{max}},T}, \
\Upsilon_{\text{max},T,a}:=\Upsilon_{h_{\text{max}},T,a},
\end{equation*}
and
\begin{equation*}
J_{\infty,T}:=J_{h_{\infty},T}, \
\Upsilon_{\infty,T,a}:=\Upsilon_{h_{\infty},T,a}.
\end{equation*}
It is obvious that $J_{\infty,T}(u)\geq J_{\text{max},T}(u)$ and
$E_{\epsilon,T}(u)\geq J_{\text{max},T}(u)$ for any $u\in S(a)$. By
Lemma \ref{lem3.2}, the definition
\begin{equation*}
\Gamma_{\epsilon,T,a}:=\inf_{u\in S(a)}E_{\epsilon,T}(u)
\end{equation*}
is well defined and  $\Gamma_{\epsilon,T,a}\geq
\Upsilon_{\text{max},T,a}$.

The next lemma  establishes some crucial relations involving the
levels $\Gamma_{\epsilon,T,a}$, $\Upsilon_{\infty,T,a}$ and
$\Upsilon_{\text{max},T,a}$.

\begin{lemma}\label{lem4.2}
Let $N, a,\eta,p,q, h, \epsilon$ be as in (\ref{e1.1}), (\ref{e2.1})
hold. Then
$$\limsup_{\epsilon\to 0^+}\Gamma_{\epsilon,T,a}\leq
\Upsilon_{\text{max},T,a}<\Upsilon_{\infty,T,a}<0.$$
\end{lemma}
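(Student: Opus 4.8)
The plan is to establish the three inequalities separately, treating the two rightmost ones as immediate consequences of results already proved and reserving the real work for the leftmost one.

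For the two rightmost inequalities I would argue as follows. Since $h_\infty \in (0, h_{\text{max}})$ by $(h_1)$ and $(h_2)$, Lemma~\ref{lem3.4} applied with $\mu = h_\infty$ and $a_1 = a$ gives $\Upsilon_{\infty,T,a} = \Upsilon_{h_\infty,T,a} < 0$, which is the rightmost inequality. For the middle one I would invoke Corollary~\ref{cor3.1} with $\mu_1 = h_\infty$, $\mu_2 = h_{\text{max}}$ and $a_1 = a$; because $0 < h_\infty < h_{\text{max}}$, the strict monotonicity of $\mu \mapsto \Upsilon_{\mu,T,a}$ yields $\Upsilon_{\text{max},T,a} = \Upsilon_{h_{\text{max}},T,a} < \Upsilon_{h_\infty,T,a} = \Upsilon_{\infty,T,a}$.

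The substance of the lemma is the inequality $\limsup_{\epsilon \to 0^+} \Gamma_{\epsilon,T,a} \le \Upsilon_{\text{max},T,a}$, which I would prove by exhibiting a single $\epsilon$-independent test function. By Lemma~\ref{lem3.7} the level $\Upsilon_{\text{max},T,a}$ is attained at some $w \in S(a)$, and since $J_{\text{max},T}(w) = \Upsilon_{\text{max},T,a} < 0$, Lemma~\ref{lem3.1}(2) forces $\|\nabla w\|_2 < R_0$, whence $\tau(\|\nabla w\|_2) = 1$ by (\ref{e2.7}). Consequently the truncation terms in $E_{\epsilon,T}(w)$ and in $J_{\text{max},T}(w)$ coincide exactly, and
\begin{equation*}
E_{\epsilon,T}(w) - \Upsilon_{\text{max},T,a} = E_{\epsilon,T}(w) - J_{\text{max},T}(w) = -\frac{1}{q}\int_{\mathbb{R}^N}\bigl(h(\epsilon x) - h_{\text{max}}\bigr)|w|^q\,dx.
\end{equation*}
The decisive point is that $a_1 = 0 \in h^{-1}(h_{\text{max}})$ by $(h_3)$, so that for each fixed $x$ one has $h(\epsilon x) \to h(0) = h_{\text{max}}$ as $\epsilon \to 0^+$ by continuity of $h$; since moreover $|h(\epsilon x) - h_{\text{max}}| \le h_{\text{max}} - h_0$ and $|w|^q \in L^1(\mathbb{R}^N)$ (because $w \in H^1(\mathbb{R}^N) \hookrightarrow L^q(\mathbb{R}^N)$ as $2 < q < 2^*$), the dominated convergence theorem shows that the integral above tends to $0$. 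Hence $E_{\epsilon,T}(w) \to \Upsilon_{\text{max},T,a}$, and using $\Gamma_{\epsilon,T,a} = \inf_{u \in S(a)} E_{\epsilon,T}(u) \le E_{\epsilon,T}(w)$ I would conclude $\limsup_{\epsilon \to 0^+} \Gamma_{\epsilon,T,a} \le \lim_{\epsilon \to 0^+} E_{\epsilon,T}(w) = \Upsilon_{\text{max},T,a}$.

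This lemma has no genuine obstacle; the only point requiring care is the verification of the hypotheses of dominated convergence, namely the pointwise limit $h(\epsilon x) \to h_{\text{max}}$ (which is exactly where the normalization $a_1 = 0$ in $(h_3)$ enters) together with the uniform bound $h_0 \le h(\epsilon x) \le h_{\text{max}}$ from $(h_1)$. I would emphasize that $w$ is chosen independently of $\epsilon$, so that $\|\nabla w\|_2$, and hence $\tau(\|\nabla w\|_2) = 1$, do not vary with $\epsilon$; this is precisely what makes the truncation terms cancel and reduces the estimate to the single subcritical integral above.
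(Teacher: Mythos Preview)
Your proof is correct and follows essentially the same route as the paper: pick a minimizer $w$ of $J_{\text{max},T}$ on $S(a)$ via Lemma~\ref{lem3.7}, bound $\Gamma_{\epsilon,T,a}\le E_{\epsilon,T}(w)$, pass to the limit using dominated convergence and $h(0)=h_{\text{max}}$, and then invoke Lemma~\ref{lem3.4} and Corollary~\ref{cor3.1} for the two strict inequalities. Your additional remark that $\tau(\|\nabla w\|_2)=1$ is harmless but actually unnecessary, since the truncation factor in $E_{\epsilon,T}(w)$ and $J_{\text{max},T}(w)$ is the same quantity $\tau(\|\nabla w\|_2)$ regardless of its value, so the cancellation occurs automatically.
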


\begin{proof}
By Lemma \ref{lem3.7}, choose  $u \in  S(a)$ such that
$J_{\text{max},T}(u)=\Upsilon_{\text{max},T,a}$. Then,
\begin{equation*}
\begin{split}
\Gamma_{\epsilon,T,a}\leq E_{\epsilon,
T}(u)&=\frac{1}{2}\int_{\mathbb{R}^N}|\nabla
u|^2dx-\frac{1}{q}\int_{\mathbb{R}^N}h(\epsilon
x)|u|^{q}dx\\
&\qquad-\frac{\eta}{p}\tau(\|\nabla
u\|_2)\int_{\mathbb{R}^N}|u|^{p}dx.
\end{split}
\end{equation*}
Letting $\epsilon\to 0^+$, by the Lebesgue dominated convergence
theorem, we deduce that
\begin{equation*}
\limsup_{\epsilon\to 0^+}\Gamma_{\epsilon,T,a}\leq
\limsup_{\epsilon\to 0^+} E_{\epsilon, T}(u)=J_{h(0),T}(u)=
J_{\text{max},T}(u)=\Upsilon_{\text{max},T,a},
\end{equation*}
which combined with Lemma \ref{lem3.4} and Corollary \ref{cor3.1}
completes the proof.
\end{proof}

By Lemma \ref{lem4.2}, there exists $\epsilon_1>0$ such that
$\Gamma_{\epsilon,T,a}<\Upsilon_{\infty,T,a}$ for all $\epsilon\in
(0,\epsilon_1)$. In the following, we always assume that
$\epsilon\in (0,\epsilon_1)$. Similarly to the proof of  Lemma
\ref{lem3.1}, we have the following result, whose proof is omitted.

\begin{lemma}\label{lem4.4}
Let $N, a,\eta,p,q, h, \epsilon$ be as in (\ref{e1.1}),
$\epsilon\in(0,\epsilon_1)$, (\ref{e2.1})
hold. Then\\
(1) $E_{\epsilon,T}\in C^1(H^1(\mathbb{R}^N),\mathbb{R})$;\\
(2) If $u\in S(a)$ such that $E_{\epsilon,T}(u)<0$, then $\|\nabla
u\|_2<R_0$ and $E_{\epsilon,T}(v) = E_{\epsilon} (v)$ for all $v$
satisfying $\|v\|_2\leq a$ and being in a small neighborhood of $u$
in $H^1(\mathbb{R}^N)$.
\end{lemma}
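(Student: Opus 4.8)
The plan is to mirror the proof of Lemma \ref{lem3.1}, adapting it to the non-autonomous coefficient $h(\epsilon x)$ in place of the constant $\mu$ and to the full sphere $S(a)$.

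For part (1), I would argue that $E_{\epsilon,T}$ is a sum of three $C^1$ maps on $H^1(\mathbb{R}^N)$. The Dirichlet term $\frac12\|\nabla u\|_2^2$ is a smooth quadratic form, and the term $\frac1q\int_{\mathbb{R}^N}h(\epsilon x)|u|^q\,dx$ is of class $C^1$ because $2<q\le 2^*$ yields a continuous Sobolev embedding and $h$ is bounded and continuous by $(h_1)$. The only point requiring care is the truncated term $\tau(\|\nabla u\|_2)\int_{\mathbb{R}^N}|u|^p\,dx$, a product of the $C^1$ map $u\mapsto\int_{\mathbb{R}^N}|u|^p\,dx$ with the map $u\mapsto\tau(\|\nabla u\|_2)$. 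Since $\tau\in C^\infty$ is constant (equal to $1$) on $[0,R_0]$, at any $u$ with $\nabla u\equiv 0$ the second map is locally constant, while at any $u$ with $\nabla u\not\equiv 0$ the function $u\mapsto\|\nabla u\|_2$ is smooth; in both cases the composition is $C^1$, hence so is the product.

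For part (2), the argument is a direct transcription of Lemma \ref{lem3.1}. Using $(h_1)$, namely $h(\epsilon x)\le h_{\text{max}}$, together with the Gagliardo--Nirenberg inequality (\ref{e2.9}) and $\|v\|_2\le a$ (with $1-\gamma_q>0$, $1-\gamma_p\ge 0$, and $\tau\ge 0$), I would first establish
\begin{equation*}
E_{\epsilon,T}(v)\ge \bar{g}_a(\|\nabla v\|_2)\quad\text{for all }v\text{ with }\|v\|_2\le a,
\end{equation*}
which extends (\ref{e2.8}) from $S(a)$ to the closed $L^2$-ball of radius $a$. Applied to $u$, the hypothesis $E_{\epsilon,T}(u)<0$ forces $\bar{g}_a(\|\nabla u\|_2)<0$, and by (\ref{e2.12}), where $\bar{g}_a$ is positive on $(R_0,+\infty)$, this gives $\|\nabla u\|_2<R_0$. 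By part (1), $E_{\epsilon,T}(v)<0$ for all $v$ in a small $H^1$-neighborhood of $u$; combined with $\|v\|_2\le a$ and the displayed inequality this yields $\bar{g}_a(\|\nabla v\|_2)<0$, hence $\|\nabla v\|_2<R_0$ for such $v$. Finally, since $\tau\equiv 1$ on $[0,R_0]$ by (\ref{e2.7}), we conclude $E_{\epsilon,T}(v)=E_{\epsilon}(v)$.

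The estimate and the truncation bookkeeping are routine; the only genuinely delicate point is the $C^1$ regularity of $u\mapsto\tau(\|\nabla u\|_2)$ at functions with vanishing gradient, which is handled precisely because $\tau$ is flat near the origin. Everything else follows the template of Lemma \ref{lem3.1}, with $h_{\text{max}}$ replacing the constant $\mu$ in the lower bounds.
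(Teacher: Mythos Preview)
Your proposal is correct and follows exactly the approach the paper intends: the paper omits the proof of Lemma~\ref{lem4.4} and simply refers back to Lemma~\ref{lem3.1}, whose argument you have transcribed faithfully with $h(\epsilon x)\le h_{\max}$ replacing $\mu\le h_{\max}$. Your added remark on the $C^1$ regularity of $u\mapsto\tau(\|\nabla u\|_2)$ at points with $\nabla u=0$---handled because $\tau$ is flat on $[0,R_0]$---is a useful detail that the paper leaves implicit in calling part~(1) ``trivial.''
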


The next two lemmas will be used to prove the $(PS)$ condition for
$E_{\epsilon,T}$ restricts to $S(a)$ at some levels.

\begin{lemma}\label{lem4.1}
Let $N, a,\eta,p,q, h$ be as in (\ref{e1.1}), $\epsilon\in
(0,\epsilon_1)$, (\ref{e2.1}) hold. Assume  $\{u_n\}\subset S(a)$
such that $E_{\epsilon,T}(u_n)\to c$ as $n\to+\infty$ with
$c<\Upsilon_{\infty,T,a}$. If $u_n\rightharpoonup u$ in
$H^1(\mathbb{R}^N)$, then $u\not\equiv 0$.
\end{lemma}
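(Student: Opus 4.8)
The plan is to argue by contradiction: assume $u\equiv 0$, i.e. $u_n\rightharpoonup 0$ in $H^1(\mathbb{R}^N)$, and derive $c\geq \Upsilon_{\infty,T,a}$, contradicting the hypothesis $c<\Upsilon_{\infty,T,a}$. First I would record that $\{u_n\}$ is bounded in $H^1(\mathbb{R}^N)$: since $E_{\epsilon,T}(u_n)\to c<\Upsilon_{\infty,T,a}<0$ (the last inequality by Lemma \ref{lem4.2}), we have $E_{\epsilon,T}(u_n)<0$ for $n$ large, so Lemma \ref{lem4.4}(2) gives $\|\nabla u_n\|_2<R_0$; together with $\|u_n\|_2=a$ this bounds $\{u_n\}$ in $H^1(\mathbb{R}^N)$.

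The heart of the argument is to compare $E_{\epsilon,T}$ with the limiting autonomous functional $J_{\infty,T}$. Since the kinetic terms and the truncated $p$-terms are identical, the difference reduces to the $q$-term:
\begin{equation*}
E_{\epsilon,T}(u_n)-J_{\infty,T}(u_n)=\frac{1}{q}\int_{\mathbb{R}^N}\bigl(h_{\infty}-h(\epsilon x)\bigr)|u_n|^q\,dx.
\end{equation*}
I would show this difference tends to $0$. Given $\delta>0$, condition $(h_2)$ provides $R>0$ with $|h(y)-h_{\infty}|<\delta$ for $|y|>R$; hence $|h(\epsilon x)-h_{\infty}|<\delta$ for $|x|>R/\epsilon$. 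On the ball $\{|x|\leq R/\epsilon\}$ I use that $|h(\epsilon x)-h_{\infty}|$ is bounded (by $(h_1)$) while $u_n\to 0$ in $L^q_{\mathrm{loc}}(\mathbb{R}^N)$ by the compact Sobolev embedding (valid since $2<q<2^*$), so $\int_{|x|\leq R/\epsilon}|u_n|^q\,dx\to 0$. On the complement the integrand is bounded by $\delta|u_n|^q$, and $\|u_n\|_q^q$ is bounded by the previous step. Letting $n\to\infty$ and then $\delta\to 0^+$ yields $E_{\epsilon,T}(u_n)-J_{\infty,T}(u_n)\to 0$.

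Consequently $J_{\infty,T}(u_n)\to c$. But $u_n\in S(a)$ forces $J_{\infty,T}(u_n)\geq \Upsilon_{\infty,T,a}=\inf_{v\in S(a)}J_{\infty,T}(v)$, so passing to the limit gives $c\geq \Upsilon_{\infty,T,a}$, contradicting $c<\Upsilon_{\infty,T,a}$. Therefore $u\not\equiv 0$.

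I expect the only delicate point to be the splitting estimate on $\int_{\mathbb{R}^N}(h_{\infty}-h(\epsilon x))|u_n|^q\,dx$: one must combine the local strong $L^q$-convergence (to handle the bounded-potential piece on a fixed ball, with $\epsilon$ fixed) with the uniform smallness of $h(\epsilon x)-h_{\infty}$ far out. Everything else is bookkeeping once the boundedness of $\{u_n\}$ and the cancellation of the kinetic and $p$-terms are in place.
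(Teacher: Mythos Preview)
Your proof is correct and follows essentially the same route as the paper: argue by contradiction, use Lemma~\ref{lem4.4} to get $\|\nabla u_n\|_2<R_0$ and hence boundedness, write $E_{\epsilon,T}(u_n)-J_{\infty,T}(u_n)$ as the single $q$-integral, and split it into a ball of radius $R/\epsilon$ (handled by local compact $L^q$-convergence) and its complement (handled by $(h_2)$ and the uniform $L^q$-bound), concluding $c\geq\Upsilon_{\infty,T,a}$. The only cosmetic difference is that the paper keeps a one-sided inequality $E_{\epsilon,T}(u_n)\geq J_{\infty,T}(u_n)-\delta C+o_n(1)$ rather than proving the full difference tends to zero, but since $h_\infty$ is an actual limit your two-sided estimate is equally valid.
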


\begin{proof}
Assume by contradiction that $u\equiv0$. Then,
\begin{equation*}
c+o_n(1)=E_{\epsilon,T}(u_n)=J_{\infty,T}(u_n)+\frac{1}{q}\int_{\mathbb{R}^N}(h_{\infty}-h(\epsilon
x))|u_n|^qdx.
\end{equation*}
By ($h_2$), for any given $\delta>0$, there exists $R>0$ such that
$h_{\infty}\geq h(x)-\delta$ for all $|x|\geq R$. Hence,
\begin{equation*}
\begin{split}
c+o_n(1)=E_{\epsilon,T}(u_n)&\geq
J_{\infty,T}(u_n)+\frac{1}{q}\int_{B_{R/\epsilon}(0)}(h_{\infty}
-h(\epsilon
x))|u_n|^qdx\\
&\qquad-\frac{\delta}{q}\int_{B^c_{R/\epsilon}(0)}|u_n|^qdx.
\end{split}
\end{equation*}
Recalling that $\{u_n\}$ is bounded in $H^1(\mathbb{R}^N)$ and
$u_n\to  0$ in $L^t(B_{R/\epsilon}(0))$ for all $t\in[1, 2^*)$, it
follows that
\begin{equation*}
c+o_n(1)=E_{\epsilon,T}(u_n)\geq J_{\infty,T}(u_n)-\delta C+o_n(1)
\end{equation*}
for some $C > 0$. Since $\delta > 0$ is arbitrary, we deduce that
$c\geq \Upsilon_{\infty,T,a}$, which is a contradiction. Thus,
$u\not\equiv 0$.
\end{proof}

\begin{lemma}\label{lem4.5}
Let $N, a,\eta,p,q, h$ be as in (\ref{e1.1}), $\epsilon\in
(0,\epsilon_1)$, (\ref{e2.1}) hold. If $p=2^*$, we further assume
(\ref{e2.4}) hold. Let $\{u_{n}\}$ be a $(PS)_c$ sequence of
$E_{\epsilon,T}$ restricted to $S(a)$ with $c <
\Upsilon_{\infty,T,a}$ and let $u_n\rightharpoonup u_\epsilon$ in
$H^1(\mathbb{R}^N)$. If $u_n\not\rightarrow u_{\epsilon}$ in
$H^1(\mathbb{R}^N)$, there exists $\beta>0$ independent of
$\epsilon\in (0,\epsilon_1)$ such that
\begin{equation*}
\limsup_{n\to +\infty}\|u_n-u_{\epsilon}\|_2\geq \beta.
\end{equation*}
\end{lemma}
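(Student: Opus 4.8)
The plan is to reduce to the untruncated functional, extract the Lagrange multiplier $\lambda_\epsilon$ of the weak limit, show that it is negative and bounded away from $0$ \emph{uniformly in} $\epsilon$, and then feed this into a Br\'ezis--Lieb splitting to bound $\|u_n-u_\epsilon\|_2$ from below. Throughout I write $v_n:=u_n-u_\epsilon\rightharpoonup 0$ (with $u_\epsilon\not\equiv 0$ by Lemma \ref{lem4.1}, although this is not essential below).

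First I would reduce to $E_\epsilon$. Since $E_{\epsilon,T}(u_n)\to c<\Upsilon_{\infty,T,a}<0$, for $n$ large $E_{\epsilon,T}(u_n)<0$, so Lemma \ref{lem4.4} gives $\|\nabla u_n\|_2<R_0$; as $\tau\equiv 1$ and $\tau'=0$ on $[0,R_0]$, we have $E_{\epsilon,T}'(u_n)=E_\epsilon'(u_n)$ there, and $\{u_n\}$ is a bounded $(PS)_c$ sequence for $E_\epsilon|_{S(a)}$. Writing the constraint derivative as $E_\epsilon'(u_n)-\lambda_n u_n\to 0$ in $H^{-1}$ with $\lambda_n=a^{-2}\langle E_\epsilon'(u_n),u_n\rangle$, and combining $\langle E_\epsilon'(u_n),u_n\rangle=\lambda_n a^2+o_n(1)$ with $E_\epsilon(u_n)=c+o_n(1)$, I eliminate $\|\nabla u_n\|_2^2$ to get $\lambda_n a^2=2c-(1-\tfrac2q)\int_{\mathbb{R}^N}h(\epsilon x)|u_n|^q\,dx-(1-\tfrac2p)\eta\|u_n\|_p^p+o_n(1)\le 2c+o_n(1)$, because $1-\tfrac2q>0$, $1-\tfrac2p>0$. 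Hence, along a subsequence, $\lambda_n\to\lambda_\epsilon\le 2c/a^2<0$, and crucially $|\lambda_\epsilon|\ge 2|c|/a^2\ge 2|\Upsilon_{\infty,T,a}|/a^2=:\lambda_*>0$, a bound independent of $\epsilon$ since $\Upsilon_{\infty,T,a}$ and $a$ are.

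Next I would test $E_\epsilon'(u_n)-\lambda_n u_n\to 0$ against the bounded sequence $v_n$. Using the Br\'ezis--Lieb lemma for the gradient and the $L^2$, $L^q$, $L^p$ norms, together with $\int_{\mathbb{R}^N}h(\epsilon x)|u_\epsilon|^{q-2}u_\epsilon v_n\,dx\to 0$ and the analogous $p$-term (both because $v_n\rightharpoonup 0$), I obtain the asymptotic relation $\|\nabla v_n\|_2^2+|\lambda_\epsilon|\,\|v_n\|_2^2=\int_{\mathbb{R}^N}h(\epsilon x)|v_n|^q\,dx+\eta\|v_n\|_p^p+o_n(1)$. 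Writing $t=\|\nabla v_n\|_2$ and $s=\|v_n\|_2$, Br\'ezis--Lieb also gives $\limsup_n t\le R_0$. I then estimate the right-hand side: for $p=2^*$ the critical term is $\eta\|v_n\|_{2^*}^{2^*}\le \eta S^{-2^*/2}t^{2^*}\le\theta\,t^2$ with $\theta:=\eta S^{-2^*/2}R_0^{2^*-2}<1$, the strict inequality being exactly (\ref{e2.3}) (equivalently (\ref{e2.4})); for $p<2^*$ the $L^p$ term is $\le C s^{p(1-\gamma_p)}t^2$ and is absorbed once $s$ is small. The $L^q$ term is $\le h_{\text{max}}C_{N,q}^q s^{q(1-\gamma_q)}t^{q\gamma_q}$, and by Young's inequality $s^{q(1-\gamma_q)}t^{q\gamma_q}\le \kappa t^2+C(\kappa)s^{r}$ with $r=\tfrac{2q(1-\gamma_q)}{2-q\gamma_q}>2$ (this last inequality is equivalent to $q>2$). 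Choosing $\kappa$ so small that all $t^2$-contributions are absorbed into the left-hand side (using $\theta<1$), I reach $\tfrac{1-\theta}{2}t^2+|\lambda_\epsilon|s^2\le C'' s^{r}+o_n(1)$. Passing to a subsequence with $t\to t_0$, $s\to s_0$: if $s_0=0$ then $t_0=0$, i.e. $v_n\to 0$ in $H^1$, contradicting $u_n\not\to u_\epsilon$; hence $s_0>0$, so $|\lambda_\epsilon|\le C'' s_0^{r-2}$ and $\limsup_n\|u_n-u_\epsilon\|_2=s_0\ge(\lambda_*/C'')^{1/(r-2)}=:\beta$, which depends only on $N,p,q,a,\eta,h_{\text{max}},S$ and $\Upsilon_{\infty,T,a}$.

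The main obstacle is the \emph{uniformity} of $\beta$ in $\epsilon$, and it is secured by two $\epsilon$-independent facts. The first is the lower bound $|\lambda_\epsilon|\ge\lambda_*>0$, which rests on the hypothesis $c<\Upsilon_{\infty,T,a}$ together with the fact that the threshold $\Upsilon_{\infty,T,a}$ (built from the limit potential $h_\infty$) does not depend on $\epsilon$. The second is the strict inequality $\theta<1$ in the Sobolev estimate of the critical term, which is precisely condition (\ref{e2.4}) and lets the critical nonlinearity be absorbed with an $\epsilon$-independent constant. Some care is also needed to justify the Br\'ezis--Lieb splittings of the nonlinear terms in $H^{-1}$ and to check $r>2$, which is what upgrades the conclusion $s_0>0$ to a genuine uniform lower bound.
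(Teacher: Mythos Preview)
Your argument is correct and shares its first half with the paper: both reduce to $E_\epsilon$ via Lemma~\ref{lem4.4}, extract $\lambda_n\to\lambda_\epsilon$ with the uniform bound $\lambda_\epsilon\le 2\Upsilon_{\infty,T,a}/a^2<0$ (the paper's (\ref{e4.11})), and reach the splitting relation $\|\nabla v_n\|_2^2-\lambda_\epsilon\|v_n\|_2^2=\int h(\epsilon x)|v_n|^q+\eta\|v_n\|_p^p+o_n(1)$ (the paper's (\ref{e4.15})). The second half differs. The paper first infers $\|v_n\|\ge C$, hence $h_{\max}\|v_n\|_q^q+\eta\|v_n\|_p^p\ge C$; for $p<2^*$ it then reads off $\|v_n\|_2\ge\beta$ directly from Gagliardo--Nirenberg (both terms carry a positive power of $\|v_n\|_2$, while $\|\nabla v_n\|_2\le R_0$), whereas for $p=2^*$ it argues by contradiction through a double limit $\liminf_{\epsilon\to0^+}\limsup_{n}$, showing that failure of a uniform $L^2$ bound would force $\|\nabla v_n\|_2\ge\eta^{-(N-2)/4}S^{N/4}>R_0$. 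You instead keep $|\lambda_\epsilon|\,s^2$ on the left, absorb the gradient contributions of the nonlinearities using $t\le R_0$ (and, when $p=2^*$, the strict inequality $\theta:=\eta S^{-2^*/2}R_0^{2^*-2}<1$ from (\ref{e2.4})), and finish with $|\lambda_\epsilon|s_0^2\le C''s_0^{r}$, $r=\tfrac{2q(1-\gamma_q)}{2-q\gamma_q}>2$. Your route treats both cases by the same mechanism and avoids the paper's double-limit contradiction in the critical case; the only cost is that your handling of the subcritical $L^p$ term (``absorbed once $s$ is small'') hides a short case split you should spell out: pick $\beta_0$ so that $s<\beta_0$ forces $Cs^{p(1-\gamma_p)}<\tfrac12$, then either $s_0\ge\beta_0$ or the absorption is legitimate and the Young step yields $s_0\ge(\lambda_*/C'')^{1/(r-2)}$. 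That is a presentational point, not a genuine gap.
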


\begin{proof}
By Lemma \ref{lem4.4}, we must have $\|\nabla u_n\|_2<R_0$ for $n$
large enough, and so, $\{u_n\}$ is also a $(PS)_c$ sequence of
$E_{\epsilon}$ restricted to $S(a)$. Hence,
\begin{equation*}
E_{\epsilon}(u_n)\to c\ \text{and}\
\|E_{\epsilon}|'_{S(a)}(u_n)\|\to 0\ \text{as}\ n\to+\infty.
\end{equation*}
Setting the functional $\Psi: H^1(\mathbb{R}^N)\to \mathbb{R}$ given
by
\begin{equation*}
\Psi(u)=\frac{1}{2}\int_{\mathbb{R}^N}|u|^2dx,
\end{equation*}
it follows that $S(a) =\Psi^{-1}(a^2/2)$. Then, by Willem
(\cite{Willem 1996}, Proposition 5.12), there exists
$\{\lambda_n\}\subset \mathbb{R}$ such that
\begin{equation}\label{e4.8}
\|E'_{\epsilon}(u_n)-\lambda_n\Psi'(u_n)\|_{H^{-1}(\mathbb{R}^N)}\to
0\ \text{as}\ n\to+\infty.
\end{equation}

By the boundedness of $\{u_n\}$ in $H^1(\mathbb{R}^N)$, we know
$\{\lambda_n\}$ is bounded and thus, for some subsequence, there
exists $\lambda_{\epsilon}$ such that $\lambda_n\to
\lambda_{\epsilon}$ as $n\to+\infty$. This together with
(\ref{e4.8}) leads to
\begin{equation}\label{e4.10}
E'_{\epsilon}(u_\epsilon)-\lambda_\epsilon\Psi'(u_\epsilon)=0\text{\
in \ }H^{-1}(\mathbb{R}^N)
\end{equation}
and then
\begin{equation}\label{e4.9}
\|E'_{\epsilon}(v_n)-\lambda_\epsilon\Psi'(v_n)\|_{H^{-1}(\mathbb{R}^N)}\to
0\ \text{as}\ n\to+\infty,
\end{equation}
where $v_n:=u_n-u_\epsilon$. By direct calculations, we get that
\begin{equation*}
\begin{split}
\Upsilon_{\infty,T,a}&>\lim_{n\to+\infty}E_{\epsilon}(u_n)\\
&=\lim_{n\to+\infty}\left(E_{\epsilon}(u_n)-\frac{1}{2}E_{\epsilon}'(u_n)u_n+\frac{1}{2}\lambda_n\|u_n\|_2^2+o_n(1)\right)\\
&=\lim_{n\to+\infty}\left[\left(\frac{1}{2}-\frac{1}{q}\right)\int_{\mathbb{R}^N}h(\epsilon
x)|u_n|^qdx\right.\\
&\qquad\qquad\qquad\left.+\left(\frac{1}{2}-\frac{1}{p}\right)\eta\int_{\mathbb{R}^N}|u_n|^pdx+\frac{1}{2}\lambda_na^2+o_n(1)\right]\\
&\geq \frac{1}{2}\lambda_{\epsilon}a^2,
\end{split}
\end{equation*}
which implies that
\begin{equation}\label{e4.11}
\lambda_{\epsilon}\leq \frac{2\Upsilon_{\infty,T,a}}{a^2}<0\
\text{for\ all\ }\epsilon\in (0,\epsilon_1).
\end{equation}
By (\ref{e4.9}), we know
\begin{equation}\label{e4.15}
\int_{\mathbb{R}^N}|\nabla
v_n|^2dx-\lambda_{\epsilon}\int_{\mathbb{R}^N}|v_n|^2dx-\int_{\mathbb{R}^N}h(\epsilon
x)|v_n|^{q}dx-\eta\int_{\mathbb{R}^N}|v_n|^{p}dx=o_n(1),
\end{equation}
which combined with (\ref{e4.11}) gives that
\begin{equation}\label{e4.3}
\begin{split}
\int_{\mathbb{R}^N}|\nabla
v_n|^2dx&-\frac{2\Upsilon_{\infty,T,a}}{a^2}\int_{\mathbb{R}^N}|v_n|^2dx\\
&\leq
h_{\text{max}}\int_{\mathbb{R}^N}|v_n|^{q}dx+\eta\int_{\mathbb{R}^N}|v_n|^{p}dx+o_n(1).
\end{split}
\end{equation}

If $u_n\not\rightarrow u_{\epsilon}$ in $H^1(\mathbb{R}^N)$, that
is, $v_n\not\rightarrow 0$ in $H^1(\mathbb{R}^N)$, by (\ref{e4.3})
and the Sobolev inequality, we deduce that
\begin{equation*}
\begin{split}
\int_{\mathbb{R}^N}|\nabla
v_n|^2dx&-\frac{2\Upsilon_{\infty,T,a}}{a^2}\int_{\mathbb{R}^N}|v_n|^2dx\\
&\leq h_{\text{max}}C_{N,q}^q\|v_n\|^q+\eta
C_{N,p}^p\|v_n\|^p+o_n(1).
\end{split}
\end{equation*}
So there exists $C>0$ independent of $\epsilon$ such that
$\|v_n\|\geq C$ and then by (\ref{e4.3})
\begin{equation}\label{e4.4}
\limsup_{n\to
+\infty}\left(h_{\text{max}}\int_{\mathbb{R}^N}|v_n|^{q}dx+\eta\int_{\mathbb{R}^N}|v_n|^{p}dx\right)\geq
C.
\end{equation}
Case $p<2^*$, by (\ref{e4.4}) and  the Gagliardo-Nirenberg
inequality (\ref{e2.9}), there exists $\beta
> 0$ independent of $\epsilon\in (0,\epsilon_1)$ such that
\begin{equation}\label{e4.5}
\limsup_{n\to +\infty}\|v_n\|_2\geq \beta.
\end{equation}
Case $p=2^*$, if
\begin{equation*}
\limsup_{n\to +\infty}\int_{\mathbb{R}^N}|v_n|^{q}dx\geq C
\end{equation*}
for some $C>0$ independent of $\epsilon$, we  obtain (\ref{e4.5}) as
well. If
\begin{equation*} \liminf_{\epsilon\to 0^+}\limsup_{n\to
+\infty}\int_{\mathbb{R}^N}|v_n|^{q}dx=0\ \text{and}\
\liminf_{\epsilon\to 0^+}\limsup_{n\to +\infty}\|v_n\|_2=0,
\end{equation*}
by (\ref{e4.4}) we have
\begin{equation*}
\liminf_{\epsilon\to 0^+}\limsup_{n\to
+\infty}\int_{\mathbb{R}^N}|v_n|^{p}dx\geq C,
\end{equation*}
and by (\ref{e4.15}) we have
\begin{equation*}
\liminf_{\epsilon\to 0^+}\limsup_{n\to
+\infty}\int_{\mathbb{R}^N}|\nabla v_n|^2dx=\liminf_{\epsilon\to
0^+}\limsup_{n\to +\infty}\eta\int_{\mathbb{R}^N}|v_n|^{p}dx.
\end{equation*}
Applying the Sobolev inequality to the above equality, we obtain
that
\begin{equation*}
\liminf_{\epsilon\to 0^+}\limsup_{n\to +\infty}\|\nabla
v_n\|_2^2=\liminf_{\epsilon\to 0^+}\limsup_{n\to
+\infty}\eta\|v_n\|_{2^*}^{2^*}\leq\liminf_{\epsilon\to
0^+}\limsup_{n\to +\infty}\eta S^{-2^*/2}\|\nabla v_n\|_2^{2^*},
\end{equation*}
which implies that
\begin{equation*}
R_0\geq \liminf_{\epsilon\to 0^+}\limsup_{n\to +\infty}\|\nabla
v_n\|_2\geq \eta^{-\frac{N-2}{4}}S^{N/4}.
\end{equation*}
That contradicts  $R_0<\eta^{-\frac{N-2}{4}}S^{N/4}$, that is,
assumption (\ref{e2.4}). So we must have (\ref{e4.5}) for the case
$p=2^*$. The proof is complete.
\end{proof}

Now we give the compactness lemma.

\begin{lemma}\label{lem4.3}
Let $N, a,\eta,p,q, h$ be as in (\ref{e1.1}), $\epsilon\in
(0,\epsilon_1)$, $\beta$ be as in Lemma \ref{lem4.5}, $$0<\rho_0\leq
\min\left\{\Upsilon_{\infty,T,a}
-\Upsilon_{\text{max},T,a},\frac{\beta^2}{a^2}(\Upsilon_{\infty,T,a}-\Upsilon_{\text{max},T,a})\right\},$$
and (\ref{e2.1}) hold. If $p=2^*$, we further assume (\ref{e2.4})
hold. Then $E_{\epsilon,T}$ satisfies the $(PS)_c$ condition
restricted to $S(a)$ if $c <\Upsilon_{\text{max},T,a}+\rho_0$.
\end{lemma}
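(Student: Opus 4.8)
The plan is to take an arbitrary $(PS)_c$ sequence $\{u_n\}\subset S(a)$ for $E_{\epsilon,T}|_{S(a)}$ with $c<\Upsilon_{\text{max},T,a}+\rho_0$ and extract a strongly convergent subsequence. First I would note that the choice of $\rho_0$ forces $c<\Upsilon_{\infty,T,a}<0$, since $\rho_0\leq\Upsilon_{\infty,T,a}-\Upsilon_{\text{max},T,a}$ and $\Upsilon_{\text{max},T,a}<0$ by Lemma \ref{lem3.4}. Because $E_{\epsilon,T}(u_n)\to c<0$, Lemma \ref{lem4.4} gives $\|\nabla u_n\|_2<R_0$ for $n$ large, so $\{u_n\}$ is bounded in $H^1(\mathbb{R}^N)$ and the truncation is inactive along the sequence, i.e. $E_{\epsilon,T}(u_n)=E_\epsilon(u_n)$. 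Passing to a subsequence, $u_n\rightharpoonup u_\epsilon$ in $H^1(\mathbb{R}^N)$, and since $c<\Upsilon_{\infty,T,a}$, Lemma \ref{lem4.1} yields $u_\epsilon\not\equiv0$; set $a_0:=\|u_\epsilon\|_2\in(0,a]$. Weak lower semicontinuity gives $\|\nabla u_\epsilon\|_2\leq R_0$, so $E_{\epsilon,T}(u_\epsilon)=E_\epsilon(u_\epsilon)\geq J_{\text{max},T}(u_\epsilon)\geq\Upsilon_{\text{max},T,a_0}$.

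Next I would argue by contradiction, assuming $u_n\not\rightarrow u_\epsilon$ in $H^1(\mathbb{R}^N)$. Writing $v_n:=u_n-u_\epsilon$, Lemma \ref{lem4.5} provides $\beta>0$ (the same $\beta$ in the definition of $\rho_0$) with $\limsup_{n\to+\infty}\|v_n\|_2\geq\beta$. The Br\'ezis--Lieb Lemma gives $\|v_n\|_2^2=a^2-a_0^2+o_n(1)$, so $\|v_n\|_2\to b$ with $a^2=a_0^2+b^2$, $b\geq\beta>0$, and in particular $0<b<a$. The heart of the proof is the energy splitting
\begin{equation*}
E_\epsilon(u_n)=E_\epsilon(u_\epsilon)+J_\infty(v_n)+o_n(1).
\end{equation*}
The gradient and the $p$-power terms split by the standard Br\'ezis--Lieb identities, including the critical case $p=2^*$. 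For the non-autonomous $q$-term I would split $\int h(\epsilon x)|u_n|^q$ by Br\'ezis--Lieb and then show $\int(h(\epsilon x)-h_\infty)|v_n|^q\to0$: using $(h_2)$ one chooses $R$ so that $|h(\epsilon x)-h_\infty|$ is small on $B^c_{R/\epsilon}(0)$, while on $B_{R/\epsilon}(0)$ one has $v_n\to0$ in $L^q$ since $v_n\rightharpoonup0$. This replaces $h(\epsilon x)$ by $h_\infty$ on the escaping mass, exactly as in the proof of Lemma \ref{lem4.1}.

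With the splitting in hand, since $\|\nabla v_n\|_2<R_0$ the truncation is inactive for $v_n$, so $J_\infty(v_n)=J_{\infty,T}(v_n)\geq\Upsilon_{\infty,T,\|v_n\|_2}$, which tends to $\Upsilon_{\infty,T,b}$ by the continuity in Lemma \ref{lem3.5}. Combining with $E_\epsilon(u_\epsilon)\geq\Upsilon_{\text{max},T,a_0}$ gives
\begin{equation*}
c\geq\Upsilon_{\text{max},T,a_0}+\Upsilon_{\infty,T,b}.
\end{equation*}
The strict scaling inequalities of Lemma \ref{lem3.5}(2), valid since $0<a_0<a$ and $0<b<a$, yield $\Upsilon_{\text{max},T,a_0}>\frac{a_0^2}{a^2}\Upsilon_{\text{max},T,a}$ and $\Upsilon_{\infty,T,b}>\frac{b^2}{a^2}\Upsilon_{\infty,T,a}$. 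Writing $D:=\Upsilon_{\infty,T,a}-\Upsilon_{\text{max},T,a}>0$ and using $a_0^2+b^2=a^2$ together with $b\geq\beta$, these combine to
\begin{equation*}
c>\frac{a_0^2}{a^2}\Upsilon_{\text{max},T,a}+\frac{b^2}{a^2}\Upsilon_{\infty,T,a}=\Upsilon_{\text{max},T,a}+\frac{b^2}{a^2}D\geq\Upsilon_{\text{max},T,a}+\frac{\beta^2}{a^2}D\geq\Upsilon_{\text{max},T,a}+\rho_0,
\end{equation*}
contradicting $c<\Upsilon_{\text{max},T,a}+\rho_0$. Hence $u_n\to u_\epsilon$ strongly. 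I expect the main obstacle to be the rigorous justification of the energy splitting for the non-autonomous critical term: controlling $\int(h(\epsilon x)-h_\infty)|v_n|^q$ and the Br\'ezis--Lieb step for $\|u_n\|_{2^*}^{2^*}$ simultaneously, while keeping all truncation factors equal to $1$ so that $E_{\epsilon,T}$ and $J_{\infty,T}$ may be replaced by $E_\epsilon$ and $J_\infty$.
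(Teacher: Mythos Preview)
Your proposal is correct and follows essentially the same route as the paper: bound the sequence via Lemma~\ref{lem4.4}, use Lemma~\ref{lem4.1} to get a nontrivial weak limit, invoke Lemma~\ref{lem4.5} for the lower bound $\beta$ on the escaping mass, split the energy via Br\'ezis--Lieb (replacing $h(\epsilon x)$ by $h_\infty$ on $v_n$ exactly as in Lemma~\ref{lem4.1}), and then use the subadditivity inequalities of Lemma~\ref{lem3.5} to reach a contradiction with $c<\Upsilon_{\text{max},T,a}+\rho_0$. The only cosmetic difference is that you assert an equality $E_\epsilon(u_n)=E_\epsilon(u_\epsilon)+J_\infty(v_n)+o_n(1)$ while the paper writes the corresponding inequality $E_{\epsilon,T}(u_n)\geq E_{\epsilon,T}(v_n)+E_{\epsilon,T}(u_\epsilon)+o_n(1)$ and handles the passage to $J_{\infty,T}$ in a separate step with a $\delta$-error; since all truncation factors equal $1$ along the sequence, both formulations are equivalent.
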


\begin{proof}
Let $\{u_{n}\}\subset S(a)$ be a $(PS)_c$ sequence of
$E_{\epsilon,T}$ restricted to $S(a)$. Noting that
$c<\Upsilon_{\infty,T,a}<0$, by Lemma \ref{lem4.4}, $\{u_n\}$ is
bounded in $H^1(\mathbb{R}^N)$. Let $u_n\rightharpoonup u_\epsilon$
in $H^1(\mathbb{R}^N)$. By Lemma \ref{lem4.1}, $u_\epsilon\not\equiv
0$. Set $v_n:=u_n-u_{\epsilon}$. If $u_n\to u_\epsilon$ in
$H^1(\mathbb{R}^N)$, the proof is complete. If $u_n\not\rightarrow
u_{\epsilon}$ in $H^1(\mathbb{R}^N)$ for some $\epsilon\in
(0,\epsilon_1)$, by Lemma \ref{lem4.5},
\begin{equation*}
\limsup_{n\to +\infty}\|v_n\|_2\geq \beta.
\end{equation*}
Set $b=\|u_{\epsilon}\|_2,\ d_n = \|v_n\|_2$ and suppose that
$\|v_n\|_2\to d$, then  we get $d\geq \beta>0$ and  $a^2 = b^2 +
d^2$. From $d_n\in(0, a)$ for $n$ large enough, we have
\begin{equation}\label{e4.6}
c+o_n(1)=E_{\epsilon,T}(u_n)\geq
E_{\epsilon,T}(v_n)+E_{\epsilon,T}(u_{\epsilon})+o_n(1).
\end{equation}
Since $v_n\rightharpoonup 0$ in $H^1(\mathbb{R}^N)$, similarly to
the proof of Lemma \ref{lem4.1}, we deduce that
\begin{equation}\label{e4.7}
E_{\epsilon,T}(v_n)\geq J_{\infty,T}(v_n)-\delta C+o_n(1)
\end{equation}
for any $\delta>0$, where $C>0$ is a constant independent of
$\delta,\ \epsilon$ and $n$. By (\ref{e4.6}) and (\ref{e4.7}), we
obtain that
\begin{equation*}
\begin{split}
c+o_n(1)=E_{\epsilon,T}(u_n)&\geq
J_{\infty,T}(v_n)+E_{\epsilon,T}(u_{\epsilon})-\delta C+o_n(1)\\
&\geq \Upsilon_{\infty,T,d_n}+\Upsilon_{\text{max},T,b}-\delta
C+o_n(1).
\end{split}
\end{equation*}
Letting $n\to +\infty$, by Lemma \ref{lem3.5} and the arbitrariness
of $\delta>0$, we obtain that
\begin{equation*}
\begin{split}
c\geq \Upsilon_{\infty,T,d}+\Upsilon_{\text{max},T,b}&\geq
\frac{d^2}{a^2}\Upsilon_{\infty,T,a}+\frac{b^2}{a^2}\Upsilon_{\text{max},T,a}\\
&=\Upsilon_{\text{max},T,a}+\frac{d^2}{a^2}(\Upsilon_{\infty,T,a}-\Upsilon_{\text{max},T,a})\\
&\geq
\Upsilon_{\text{max},T,a}+\frac{\beta^2}{a^2}(\Upsilon_{\infty,T,a}-\Upsilon_{\text{max},T,a}),
\end{split}
\end{equation*}
which contradicts
$c<\Upsilon_{\text{max},T,a}+\frac{\beta^2}{a^2}(\Upsilon_{\infty,T,a}-\Upsilon_{\text{max},T,a})$.
Thus, we must have $u_n \to u_{\epsilon}$ in $H^1(\mathbb{R}^N)$.
\end{proof}

In what follows, let us fix $\tilde{\rho}, \tilde{r} > 0$
satisfying:

\smallskip

$\bullet$\ $\overline{B_{\tilde{\rho}}(a_i)}\cap
\overline{B_{\tilde{\rho}}(a_j)}=\emptyset$ for $i\neq j$ and
$i,j\in\{1,\cdots,l\}$;

\smallskip

$\bullet$\ $\cup_{i=1}^{l}B_{\tilde{\rho}}(a_i) \subset
B_{\tilde{r}}(0)$;

\smallskip

$\bullet$\
$K_{\frac{\tilde{\rho}}{2}}=\cup_{i=1}^{l}\overline{B_{\frac{\tilde{\rho}}{2}}(a_i)}$.

We also set the function $Q_\epsilon:
H^1(\mathbb{R}^N)\backslash\{0\}\to \mathbb{R}^N$ by
\begin{equation*}
Q_\epsilon(u):=\frac{\int_{\mathbb{R}^N}\chi(\epsilon
x)|u|^2dx}{\int_{\mathbb{R}^N}|u|^2dx},
\end{equation*}
where $\chi:\mathbb{R}^N\to \mathbb{R}^N$ is given by
\begin{equation*}
\chi(x):=\left\{
\begin{array}{ll}
x, &\ \text{if}\ |x|\leq \tilde{r},\\
\tilde{r}\frac{x}{|x|}, &\ \text{if}\ |x|>\tilde{r}.
\end{array}
\right.
\end{equation*}

The next two lemmas will be useful to get important ($PS$) sequences
for $E_{\epsilon,T}$ restricted to $S(a)$.

\begin{lemma}\label{lem4.6}
Let $N, a,\eta,p,q, h$ be as in (\ref{e1.1}), and (\ref{e2.1}) hold.
Then there exist $\epsilon_2\in(0,\epsilon_1], \rho_1\in(0,\rho_0]$
such that if $\epsilon\in(0, \epsilon_2)$, $u\in S(a)$ and
$E_{\epsilon,T}(u)\leq \Upsilon_{\text{max},T,a}+\rho_1$, then
\begin{equation*}
Q_\epsilon(u)\in  K_{\frac{\tilde{\rho}}{2}}.
\end{equation*}
\end{lemma}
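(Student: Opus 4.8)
The plan is to argue by contradiction, reducing the claim to the compactness dichotomy already available for the autonomous functional. If the conclusion failed, then taking $\epsilon_2=\rho_1=1/n$ would produce sequences $\epsilon_n\to 0^+$ and $u_n\in S(a)$ with $E_{\epsilon_n,T}(u_n)\le \Upsilon_{\text{max},T,a}+1/n$ but $Q_{\epsilon_n}(u_n)\notin K_{\tilde{\rho}/2}$. Since $h(\epsilon_n x)\le h_{\text{max}}$ gives $E_{\epsilon_n,T}(u_n)\ge J_{\text{max},T}(u_n)\ge \Upsilon_{\text{max},T,a}$, I would immediately get both $E_{\epsilon_n,T}(u_n)\to\Upsilon_{\text{max},T,a}$ and $J_{\text{max},T}(u_n)\to\Upsilon_{\text{max},T,a}$, so $\{u_n\}$ is a minimizing sequence for $J_{\text{max},T}$ on $S(a)$. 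As $\Upsilon_{\text{max},T,a}<0$ by Lemma \ref{lem3.4}, Lemma \ref{lem4.4}(2) forces $\|\nabla u_n\|_2<R_0$ for large $n$, so $\{u_n\}$ is bounded in $H^1(\mathbb{R}^N)$. The key byproduct is $\frac1q\int_{\mathbb{R}^N}(h_{\text{max}}-h(\epsilon_n x))|u_n|^q\,dx=E_{\epsilon_n,T}(u_n)-J_{\text{max},T}(u_n)\to 0$, and this is precisely the quantity that will localize the mass at a maximum point of $h$.

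Next I would invoke Lemma \ref{lem3.6} (with $\mu=h_{\text{max}}$ and radius $a$) for the minimizing sequence $\{u_n\}$, which yields two alternatives. In alternative (i), $u_n\to u$ strongly in $H^1(\mathbb{R}^N)$, hence in $L^2(\mathbb{R}^N)$. Since $\chi(\epsilon_n x)\to\chi(0)=0$ pointwise and $|\chi|\le\tilde{r}$, the estimate $|\int\chi(\epsilon_n x)(|u_n|^2-|u|^2)\,dx|\le\tilde{r}\,\||u_n|^2-|u|^2\|_1\to 0$ together with dominated convergence for the $|u|^2$ part gives $\int_{\mathbb{R}^N}\chi(\epsilon_n x)|u_n|^2\,dx\to 0$, so $Q_{\epsilon_n}(u_n)\to 0=a_1\in K_{\tilde{\rho}/2}$, contradicting $Q_{\epsilon_n}(u_n)\notin K_{\tilde{\rho}/2}$ for large $n$.

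The substance lies in alternative (ii): there is $\{y_n\}$ with $|y_n|\to\infty$ and $v_n(x):=u_n(x+y_n)\to v$ strongly in $H^1$, with $v\in S(a)$, $v\not\equiv 0$. Changing variables, $Q_{\epsilon_n}(u_n)=a^{-2}\int_{\mathbb{R}^N}\chi(\epsilon_n z+\epsilon_n y_n)|v_n(z)|^2\,dz$ and $\frac1q\int_{\mathbb{R}^N}(h_{\text{max}}-h(\epsilon_n z+\epsilon_n y_n))|v_n(z)|^q\,dz\to 0$. Passing to a subsequence, either $\epsilon_n y_n\to y_0\in\mathbb{R}^N$ or $|\epsilon_n y_n|\to\infty$. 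If $|\epsilon_n y_n|\to\infty$, then $h(\epsilon_n z+\epsilon_n y_n)\to h_{\infty}$ pointwise, and since $v_n\to v$ in $L^q$ and $h$ is bounded, the localization integral converges to $(h_{\text{max}}-h_{\infty})\|v\|_q^q>0$ by $(h_2)$ and $v\not\equiv 0$, contradicting its vanishing; this rules out escape to infinity. Hence $\epsilon_n y_n\to y_0$, and the same limit passage yields $(h_{\text{max}}-h(y_0))\|v\|_q^q=0$, so $h(y_0)=h_{\text{max}}$, i.e. $y_0=a_i$ for some $i$ by $(h_3)$. Because $a_i\in B_{\tilde{r}}(0)$ we have $\chi(a_i)=a_i$, and continuity of $\chi$ together with $v_n\to v$ in $L^2$ gives $Q_{\epsilon_n}(u_n)\to\chi(y_0)=a_i\in B_{\tilde{\rho}/2}(a_i)\subset K_{\tilde{\rho}/2}$, the final contradiction.

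I expect alternative (ii) to be the main obstacle, since one must control the translation $y_n$ through the vanishing scale $\epsilon_n$ simultaneously, and use $(h_2)$–$(h_3)$ to force $\epsilon_n y_n$ to land exactly on the finite set $h^{-1}(h_{\text{max}})$, the non-degeneracy being supplied by $v\not\equiv 0$. The recurring technical routine is to justify each limit in the $h$-weighted $L^q$ integrals by splitting off $|v_n|^q-|v|^q$ (handled by $L^1$ convergence and boundedness of $h$) and applying dominated convergence to the remaining $|v|^q$ term. Since both alternatives contradict $Q_{\epsilon_n}(u_n)\notin K_{\tilde{\rho}/2}$, the desired $\epsilon_2$ and $\rho_1$ must exist.
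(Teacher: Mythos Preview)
Your proof is correct and follows the same overall architecture as the paper's: argue by contradiction, observe that $\{u_n\}$ becomes a minimizing sequence for $J_{\text{max},T}$ on $S(a)$, apply the dichotomy of Lemma~\ref{lem3.6}, and derive a contradiction in each branch by showing $Q_{\epsilon_n}(u_n)\to a_i$ for some $i$. The one noteworthy difference is in how you handle branch~(ii). The paper computes the full limit $E_{\epsilon_n,T}(u_n)\to J_{h(y),T}(v)$ (respectively $J_{\infty,T}(v)$), and then invokes Corollary~\ref{cor3.1} and the strict inequality $\Upsilon_{\infty,T,a}>\Upsilon_{\text{max},T,a}$ from Lemma~\ref{lem4.2} to force $h(y)=h_{\text{max}}$ (respectively a contradiction). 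You instead isolate the single identity $E_{\epsilon_n,T}(u_n)-J_{\text{max},T}(u_n)=\tfrac{1}{q}\int(h_{\text{max}}-h(\epsilon_n x))|u_n|^q\,dx\to 0$ and pass to the limit in this one nonnegative integral, which immediately gives $(h_{\text{max}}-h(y_0))\|v\|_q^q=0$ or $(h_{\text{max}}-h_\infty)\|v\|_q^q=0$. This is a slightly more elementary route: it bypasses Corollary~\ref{cor3.1} and the level comparison entirely, relying only on $(h_2)$, $(h_3)$ and $v\not\equiv 0$. Both approaches are equally rigorous; yours is marginally more self-contained.
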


\begin{proof}
If the lemma does not occur, there must be $\rho_n\to 0,
\epsilon_n\to 0$ and $\{u_n\}\subset S(a)$ such that
\begin{equation}\label{e4.16}
E_{\epsilon_n,T}(u_n)\leq \Upsilon_{\text{max},T,a}+\rho_n\
\text{and}\ Q_{\epsilon_n}(u_n)\not\in K_{\frac{\tilde{\rho}}{2}}.
\end{equation}
Consequently,
\begin{equation*}
\Upsilon_{\text{max},T,a}\leq  J_{\text{max},T}(u_n)\leq
E_{\epsilon_n,T}(u_n)\leq \Upsilon_{\text{max},T,a}+\rho_n,
\end{equation*}
then
\begin{equation*}
\{u_n\}\subset S(a)\ \text{and}\ J_{\text{max},T}(u_n)\to
\Upsilon_{\text{max},T,a}.
\end{equation*}
According to Lemma \ref{lem3.6}, we have two cases:\\
(i)\ $u_n\to u$ in $H^1(\mathbb{R}^N)$ for some $u\in S(a)$,\\
or\\
(ii)\ There exists $\{y_n\}\subset \mathbb{R}^N$ with $|y_n|\to
+\infty$ such that $v_n(x) = u_n(x+ y_n)$ converges in
$H^1(\mathbb{R}^N)$ to some $v\in S(a)$.

Analysis of (i): By the Lebesgue dominated convergence theorem,
\begin{equation*}
Q_{\epsilon_n}(u_n)=\frac{\int_{\mathbb{R}^N}\chi(\epsilon_n
x)|u_n|^2dx}{\int_{\mathbb{R}^N}|u_n|^2dx}\to
\frac{\int_{\mathbb{R}^N}\chi(0)|u|^2dx}{\int_{\mathbb{R}^N}|u|^2dx}=0\in
K_{\frac{\tilde{\rho}}{2}}.
\end{equation*}
From this, $Q_{\epsilon_n}(u_n)\in K_{\frac{\tilde{\rho}}{2}}$ for
$n$ large enough, which contradicts $Q_{\epsilon_n}(u_n)\not\in
K_{\frac{\tilde{\rho}}{2}}$ in (\ref{e4.16}).

Analysis of (ii): Now we will study two cases: (I)
$|\epsilon_ny_n|\to +\infty$ and (II) $\epsilon_ny_n\to y$ for some
$y\in \mathbb{R}^N$.

If (I) holds, the limit $v_n \to v$ in $H^1(\mathbb{R}^N)$ provides
\begin{equation*}
\begin{split}
E_{\epsilon_n,T}(u_n)&=\frac{1}{2}\int_{\mathbb{R}^N}|\nabla
v_n|^2dx-\frac{1}{q}\int_{\mathbb{R}^N}h(\epsilon_n
x+\epsilon_ny_n)|v_n|^{q}dx\\
&\qquad-\frac{\eta}{p}\tau(\|\nabla
v_n\|_2)\int_{\mathbb{R}^N}|v_n|^{p}dx\\
&\to J_{\infty,T}(v).
\end{split}
\end{equation*}
Since $E_{\epsilon_n,T}(u_n)\leq \Upsilon_{\text{max},T,a}+\rho_n$,
we deduce that
\begin{equation*}
\Upsilon_{\infty,T,a}\leq J_{\infty,T}(v)\leq
\Upsilon_{\text{max},T,a},
\end{equation*}
which contradicts $\Upsilon_{\infty,T,a}>\Upsilon_{\text{max},T,a}$
in Lemma \ref{lem4.2}.

Now if (II) holds, then
\begin{equation*}
\begin{split}
E_{\epsilon_n,T}(u_n)&=\frac{1}{2}\int_{\mathbb{R}^N}|\nabla
v_n|^2dx-\frac{1}{q}\int_{\mathbb{R}^N}h(\epsilon_n
x+\epsilon_ny_n)|v_n|^{q}dx\\
&\qquad-\frac{\eta}{p}\tau(\|\nabla
v_n\|_2)\int_{\mathbb{R}^N}|v_n|^{p}dx\\
&\to J_{h(y),T}(v),
\end{split}
\end{equation*}
which combined with $E_{\epsilon_n,T}(u_n)\leq
\Upsilon_{\text{max},T,a}+\rho_n$ gives  that
\begin{equation*}
\Upsilon_{h(y),T,a}\leq J_{h(y),T}(v)\leq \Upsilon_{\text{max},T,a}.
\end{equation*}
By Corollary \ref{cor3.1}, we must have $h(y)=h_{\text{max}}$ and
$y=a_i$ for some $i=1,2,\cdots, l$. Hence,
\begin{equation*}
\begin{split}
Q_{\epsilon_n}(u_n)=\frac{\int_{\mathbb{R}^N}\chi(\epsilon_n
x)|u_n|^2dx}{\int_{\mathbb{R}^N}|u_n|^2dx}
&=\frac{\int_{\mathbb{R}^N}\chi(\epsilon_n
x+\epsilon_ny_n)|v_n|^2dx}{\int_{\mathbb{R}^N}|v_n|^2dx}\\
&\to
\frac{\int_{\mathbb{R}^N}\chi(y)|v|^2dx}{\int_{\mathbb{R}^N}|v|^2dx}=a_i\in
K_{\frac{\tilde{\rho}}{2}},
\end{split}
\end{equation*}
which implies that $Q_{\epsilon_n}(u_n)\in
K_{\frac{\tilde{\rho}}{2}}$ for $n$ large enough. That contradicts
(\ref{e4.16}). The proof is complete.
\end{proof}

From now on, we will use the following notations:

\smallskip

$\bullet$\ $\theta_{\epsilon}^i:=\{u\in S(a):|Q_\epsilon(u)-a_i|\leq
\tilde{\rho}\}$;

\smallskip

$\bullet$\ $\partial\theta_{\epsilon}^i:=\{u\in
S(a):|Q_\epsilon(u)-a_i|=\tilde{\rho}\}$;

\smallskip

 $\bullet$\
$\beta_{\epsilon}^{i}:=\inf_{u\in\theta_{\epsilon}^i}E_{\epsilon,T}(u)$;

\smallskip

$\bullet$\
$\tilde{\beta}_{\epsilon}^{i}:=\inf_{u\in\partial\theta_{\epsilon}^i}E_{\epsilon,T}(u)$.

\begin{lemma}\label{lem4.7}
Let $N, a,\eta,p,q, h$ be as in (\ref{e1.1}),  (\ref{e2.1}) hold,
$\epsilon_2$ and $\rho_1$ be obtained in Lemma \ref{lem4.6}. Then
there exists $\epsilon_3\in (0,\epsilon_2]$ such that
\begin{equation*}
\beta_{\epsilon}^i<\Upsilon_{\text{max},T,a}+\frac{\rho_1}{2}\
\text{and}\ \beta_{\epsilon}^{i}<\tilde{\beta}_{\epsilon}^{i},\
\text{for\ any\ }\epsilon\in(0,\epsilon_3).
\end{equation*}
\end{lemma}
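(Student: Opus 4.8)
The plan is to prove the two inequalities separately, basing the second one on Lemma \ref{lem4.6}.

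For the inequality $\beta_{\epsilon}^i<\Upsilon_{\text{max},T,a}+\frac{\rho_1}{2}$, I would construct a test function concentrated at the maximum point $a_i$. By Lemma \ref{lem3.7} choose $w\in S(a)$ with $J_{\text{max},T}(w)=\Upsilon_{\text{max},T,a}$, and set $w_\epsilon(x):=w(x-a_i/\epsilon)$, so that $\|\nabla w_\epsilon\|_2$, $\|w_\epsilon\|_p$ and $\|w_\epsilon\|_2=a$ are unchanged by the translation. After the substitution $y=x-a_i/\epsilon$ the only $\epsilon$-dependence sits in $\int_{\mathbb{R}^N}h(\epsilon y+a_i)|w|^q\,dy$ and $\int_{\mathbb{R}^N}\chi(\epsilon y+a_i)|w|^2\,dy$. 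As $\epsilon\to0^+$ one has $h(\epsilon y+a_i)\to h(a_i)=h_{\text{max}}$ and $\chi(\epsilon y+a_i)\to\chi(a_i)=a_i$, where $\chi(a_i)=a_i$ because $|a_i|<\tilde{r}$ (indeed $\cup_jB_{\tilde{\rho}}(a_j)\subset B_{\tilde{r}}(0)$). The dominated convergence theorem (using that $h$ and $\chi$ are bounded) then gives $E_{\epsilon,T}(w_\epsilon)\to J_{\text{max},T}(w)=\Upsilon_{\text{max},T,a}$ and $Q_\epsilon(w_\epsilon)\to a_i$. Hence for $\epsilon$ small enough $w_\epsilon\in\theta_{\epsilon}^i$ and $E_{\epsilon,T}(w_\epsilon)<\Upsilon_{\text{max},T,a}+\frac{\rho_1}{2}$, which yields the first estimate.

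For the inequality $\beta_{\epsilon}^i<\tilde{\beta}_{\epsilon}^i$, I would argue through the contrapositive of Lemma \ref{lem4.6}. Let $u\in\partial\theta_{\epsilon}^i$, so $|Q_\epsilon(u)-a_i|=\tilde{\rho}$. The geometric heart of the matter is that $Q_\epsilon(u)\notin K_{\frac{\tilde{\rho}}{2}}$: it lies outside $\overline{B_{\frac{\tilde{\rho}}{2}}(a_i)}$ since $\tilde{\rho}>\frac{\tilde{\rho}}{2}$, and for $j\neq i$ the disjointness $\overline{B_{\tilde{\rho}}(a_i)}\cap\overline{B_{\tilde{\rho}}(a_j)}=\emptyset$ forces $|a_i-a_j|>2\tilde{\rho}$, whence $|Q_\epsilon(u)-a_j|\geq|a_i-a_j|-|a_i-Q_\epsilon(u)|>2\tilde{\rho}-\tilde{\rho}=\tilde{\rho}>\frac{\tilde{\rho}}{2}$. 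For $\epsilon<\epsilon_2$, Lemma \ref{lem4.6} in contrapositive form then gives $E_{\epsilon,T}(u)>\Upsilon_{\text{max},T,a}+\rho_1$, and taking the infimum over $\partial\theta_{\epsilon}^i$ yields $\tilde{\beta}_{\epsilon}^i\geq\Upsilon_{\text{max},T,a}+\rho_1$.

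Finally, choosing $\epsilon_3$ as the minimum of $\epsilon_2$ and the threshold produced by the test-function step, one obtains for every $\epsilon\in(0,\epsilon_3)$
$$\beta_{\epsilon}^i<\Upsilon_{\text{max},T,a}+\frac{\rho_1}{2}<\Upsilon_{\text{max},T,a}+\rho_1\leq\tilde{\beta}_{\epsilon}^i,$$
which is precisely the assertion. I expect the main technical effort to lie in the first step, namely verifying via dominated convergence that the translated minimizer simultaneously realizes $E_{\epsilon,T}(w_\epsilon)\to\Upsilon_{\text{max},T,a}$ and $Q_\epsilon(w_\epsilon)\to a_i$; the second step is essentially the elementary metric estimate that feeds into Lemma \ref{lem4.6}.
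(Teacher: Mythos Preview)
Your proposal is correct and follows essentially the same approach as the paper: the translated minimizer $w(x-a_i/\epsilon)$ is used as a test function to obtain $\beta_\epsilon^i<\Upsilon_{\text{max},T,a}+\frac{\rho_1}{2}$, and the contrapositive of Lemma~\ref{lem4.6} yields $\tilde{\beta}_\epsilon^i\geq\Upsilon_{\text{max},T,a}+\rho_1$. Your version is in fact slightly more detailed, since you spell out the metric reason why $Q_\epsilon(u)\notin K_{\tilde{\rho}/2}$ for $u\in\partial\theta_\epsilon^i$ (the paper merely asserts this).
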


\begin{proof}
Let $u\in S(a)$ be such that
\begin{equation*}
J_{\text{max},T}(u)=\Upsilon_{\text{max},T,a}.
\end{equation*}
For $1\leq i\leq l$, we define
\begin{equation*}
\hat{u}_{\epsilon}^{i}(x):=u\left(x-\frac{a_i}{\epsilon}\right),\
x\in \mathbb{R}^N.
\end{equation*}
Then $\hat{u}_{\epsilon}^{i}\in S(a)$ for all $\epsilon>0$ and
$1\leq i\leq l$. Direct calculations give that
\begin{equation*}
E_{\epsilon,T}(\hat{u}_{\epsilon}^{i})=\frac{1}{2}\int_{\mathbb{R}^N}|\nabla
u|^2dx-\frac{1}{q}\int_{\mathbb{R}^N}h(\epsilon
x+a_i)|u|^{q}dx-\frac{\eta}{p}\tau(\|\nabla
u\|_2)\int_{\mathbb{R}^N}|u|^{p}dx,
\end{equation*}
and then
\begin{equation}\label{e4.1}
\lim_{\epsilon\to
0^+}E_{\epsilon,T}(\hat{u}_{\epsilon}^{i})=J_{h(a_i),T}(u)=J_{\text{max},T}(u)
=\Upsilon_{\text{max},T,a}.
\end{equation}
Note that
\begin{equation*}
Q_{\epsilon}(\hat{u}_{\epsilon}^i)=\frac{\int_{\mathbb{R}^N}\chi(\epsilon
x+a_i)|u|^2dx}{\int_{\mathbb{R}^N}|u|^2dx}\to a_i\text{\ as}\
\epsilon\to 0^+.
\end{equation*}
So $\hat{u}_{\epsilon}^i\in \theta_{\epsilon}^i$ for $\epsilon$
small enough, which combined with (\ref{e4.1}) implies that there
exists $\epsilon_3\in(0,\epsilon_2]$ such that
\begin{equation*}
\beta_{\epsilon}^i<\Upsilon_{\text{max},T,a}+\frac{\rho_1}{2}\text{\
for\ any}\ \epsilon\in (0,\epsilon_3).
\end{equation*}

For any $v\in \partial\theta_{\epsilon}^i$, that is, $v\in S(a)$ and
$|Q_\epsilon(v)-a_i|=\tilde{\rho}$, we obtain that
$|Q_\epsilon(v)\not\in K_{\frac{\tilde{\rho}}{2}}$. Thus, by Lemma
\ref{lem4.6},
\begin{equation*}
E_{\epsilon,T}(v)>\Upsilon_{\text{max},T,a}+\rho_1,\ \text{for\
all}\ v\in \partial\theta_{\epsilon}^i\ \text{and}\ \epsilon\in
(0,\epsilon_3),
\end{equation*}
which implies that
\begin{equation*}
\tilde{\beta}_{\epsilon}^{i}=\inf_{v\in\partial\theta_{\epsilon}^i}
E_{\epsilon,T}(v)\geq \Upsilon_{\text{max},T,a}+\rho_1,\ \text{for\
all}\ \epsilon\in(0,\epsilon_3).
\end{equation*}
Thus,
\begin{equation*}
\beta_{\epsilon}^{i}<\tilde{\beta}_{\epsilon}^{i},\ \text{for\ all\
} \epsilon\in (0,\epsilon_3).
\end{equation*}
\end{proof}

Now we are ready to prove Theorem \ref{thm1.1}.

\smallskip

\textbf{Proof of Theorem \ref{thm1.1}}. Set
$\epsilon_0:=\epsilon_3$, where $\epsilon_3$ is obtained in Lemma
\ref{lem4.7}. Let $\epsilon\in (0,\epsilon_0)$. By Lemma
\ref{lem4.7}, for each $i\in \{1,2,\cdots, l\}$, we can use the
Ekeland's variational principle to find a sequence
$\{u_n^{i})\subset \theta_{\epsilon}^i$ satisfying
\begin{equation*}
E_{\epsilon,T}(u_n^i)\to \beta_{\epsilon}^i\ \text{and}\
\|E_{\epsilon,T}|_{S(a)}'(u_n^i)\|\to 0\ \text{as}\ n\to +\infty,
\end{equation*}
that is, $\{u_n^i\}_n$ is a $(PS)_{\beta_{\epsilon}^i}$ sequence for
$E_{\epsilon,T}$ restricted to $S(a)$. Since $\beta_{\epsilon}^i<
\Upsilon_{\text{max},T,a}+\rho_0$, it follows from  Lemma
\ref{lem4.3} that there exists  $u^i$ such that $u_n^i\to u^i$ in
$H^1(\mathbb{R}^N)$. Thus
\begin{equation*}
u^i\in \theta_{\epsilon}^i,\ E_{\epsilon,T}(u^i)=\beta_{\epsilon}^i\
\text{and}\ E_{\epsilon,T}|_{S(a)}'(u^i)=0.
\end{equation*}

As
\begin{equation*}
Q_{\epsilon}(u^i)\in \overline{B_{\tilde{\rho}}(a_i)},\
Q_{\epsilon}(u^j)\in \overline{B_{\tilde{\rho}}(a_j)},
\end{equation*}
and
\begin{equation*}
\overline{B_{\tilde{\rho}}(a_i)}\cap
\overline{B_{\tilde{\rho}}(a_j)}=\emptyset\ \text{for}\ i\neq j,
\end{equation*}
we conclude that $u^i\not\equiv u^j$ for $i\neq j$ while $1\leq i, j
\leq  l$. Therefore, $E_{\epsilon, T}$ has at least $l$ nontrivial
critical points for all $\epsilon\in  (0, \epsilon_0)$.

As $E_{\epsilon, T}(u^i)<0$ for any $i=1,2,\cdots,l$, by Lemma
\ref{lem4.4}, $u^i\, (i=1,2,\cdots,l)$ are in fact the critical
points of $E_{\epsilon}$ on $S(a)$ with $E_{\epsilon}(u^i)<0$ and
then there exists $\lambda_i\in \mathbb{R}$ such that
\begin{equation*}
-\Delta u^i=\lambda_i u^i+h(\epsilon x)|u^i|^{q-2}u^i+\eta
|u^i|^{p-2}u^i,\quad x\in \mathbb{R}^N.
\end{equation*}

By using $E_{\epsilon}(u^i)=\beta_{\epsilon}^i<0$ and
$E_{\epsilon}'(u^i)u^i=\lambda_ia^2$, we obtain that
\begin{equation*}
\frac{1}{2}\lambda_ia^2=E_{\epsilon}(u^i)+\left(\frac{1}{q}-\frac{1}{2}\right)
\int_{\mathbb{R}^N}h(\epsilon
x)|u^i|^qdx+\left(\frac{1}{p}-\frac{1}{2}\right)
\int_{\mathbb{R}^N}|u^i|^pdx,
\end{equation*}
which implies that $\lambda_i<0$ for $i=1,2,\cdots,l$. The proof is
complete.

\section{Orbital stability}
\setcounter{section}{5} \setcounter{equation}{0}

In this section we investigate the orbital stability of the
solutions obtained in Theorem \ref{thm1.1}. We first give the
definition of orbital stability.

\begin{definition}\label{def5.1}
A set $\Omega\subset H^1(\mathbb{R}^N)$ is orbitally stable under
the flow associated with the problem
\begin{equation}\label{e4.17}
\begin{cases}
i\frac{\partial \psi}{\partial t}+\Delta \psi+h(\epsilon
x)|\psi|^{q-2}\psi+\eta |\psi|^{p-2}\psi=0,\ t>0,x\in\mathbb{R}^N,\\
\psi(0,x)=u_0(x)
\end{cases}
\end{equation}
if for any $\theta>0$ there exists $\gamma>0$ such that for any
$u_0\in H^1(\mathbb{R}^N)$ satisfying
\begin{equation*}
\text{dist}_{H^1(\mathbb{R}^N)}(u_0,\Omega)<\gamma,
\end{equation*}
the solution $\psi(t, \cdot)$ of problem (\ref{e4.17}) with $\psi(0,
x) =u_0$ satisfies
\begin{equation*}
\sup_{t\in
\mathbb{R}^+}\text{dist}_{H^1(\mathbb{R}^N)}(\psi(t,\cdot),\Omega)<\theta.
\end{equation*}
\end{definition}

For any $i=1,2,\cdots,l$, we define
\begin{equation*}
\begin{split}
\Omega_i:&=\{v\in \theta_{\epsilon}^i:\
E_{\epsilon,T}|_{S(a)}'(v)=0,\
E_{\epsilon,T}(v)=\beta_{\epsilon}^i\}\\
&=\{v\in \theta_{\epsilon}^i:\ E_{\epsilon}|_{S(a)}'(v)=0,\
E_{\epsilon}(v)=\beta_{\epsilon}^i, \ \|\nabla v\|_2\leq R_0\}.
\end{split}
\end{equation*}
Next we show the stability of  the sets $\Omega_i\, (i=1,\cdots,l)$
in two cases $p<2^*$ or $p=2^*$.

\begin{theorem}\label{thm5.3}
Let $N,a,\eta,q,h,\epsilon_0$ be as in Theorem \ref{thm1.1},
$p<2^*$, $\epsilon\in (0,\epsilon_0)$, (\ref{e2.1}) hold. Then
$\Omega_i \, (i=1,\cdots,l)$ is orbitally stable under the flow
associated with the problem (\ref{e4.17}).
\end{theorem}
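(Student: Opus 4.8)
The plan is to run the classical Cazenave--Lions stability scheme, exploiting the fact that along the flow the genuine functional $E_\epsilon$ and the truncated functional $E_{\epsilon,T}$ coincide as long as $\|\nabla\,\cdot\,\|_2<R_0$, together with the energy barrier at $\partial\theta_\epsilon^i$ supplied by Lemma \ref{lem4.7}. First I would record the analytic prerequisites: since $p<2^*$ is energy-subcritical, the Cauchy problem \eqref{e4.17} is locally well posed in $H^1(\mathbb{R}^N)$ and its solutions conserve the mass $\|\psi(t,\cdot)\|_2$ and the energy $E_\epsilon(\psi(t,\cdot))$.

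Next I would prove a compactness statement for minimizing sequences of the constrained problem: if $\{w_n\}\subset H^1(\mathbb{R}^N)$ satisfies $\|w_n\|_2\to a$, $|Q_\epsilon(w_n)-a_i|\le\tilde\rho$, $\|\nabla w_n\|_2\le R_0$ and $E_\epsilon(w_n)\to\beta_\epsilon^i$, then, after rescaling $\tilde w_n:=(a/\|w_n\|_2)w_n\in S(a)\cap\theta_\epsilon^i$ (note that $Q_\epsilon$ is scale invariant, so the constraint is preserved), a subsequence of $\tilde w_n$ converges in $H^1(\mathbb{R}^N)$ to some $v\in\Omega_i$, whence $\mathrm{dist}_{H^1}(w_n,\Omega_i)\to 0$. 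The proof mirrors the concentration--compactness analysis already carried out for Lemma \ref{lem3.6} and Lemma \ref{lem4.3}: since $\beta_\epsilon^i<\Upsilon_{\infty,T,a}$ (a consequence of $\rho_1\le\rho_0\le\Upsilon_{\infty,T,a}-\Upsilon_{\text{max},T,a}$), the weak limit is nontrivial exactly as in Lemma \ref{lem4.1}, the strict subadditivity of Lemma \ref{lem3.5} rules out dichotomy, and the localization by $Q_\epsilon$ keeps the mass inside $\theta_\epsilon^i$.

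The core is then a trapping estimate. Arguing by contradiction, suppose $\Omega_i$ is not orbitally stable: there exist $\theta_0>0$, data $u_{0,n}$ with $\mathrm{dist}_{H^1}(u_{0,n},\Omega_i)\to 0$, and times $t_n$ with $\mathrm{dist}_{H^1}(\psi_n(t_n,\cdot),\Omega_i)\ge\theta_0$, where $\psi_n$ solves \eqref{e4.17} with $\psi_n(0,\cdot)=u_{0,n}$. Because $\Omega_i\subset S(a)$ with $E_\epsilon=\beta_\epsilon^i<0$, $\|\nabla\,\cdot\,\|_2<R_0$ and $|Q_\epsilon-a_i|<\tilde\rho$ there (strictness follows from $\beta_\epsilon^i<\tilde\beta_\epsilon^i$), continuity gives $\|u_{0,n}\|_2=:m_n\to a$, $E_\epsilon(u_{0,n})=:c_n\to\beta_\epsilon^i$, $\|\nabla u_{0,n}\|_2<R_0$ and $|Q_\epsilon(u_{0,n})-a_i|<\tilde\rho$ for large $n$. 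I would then show, by a first-exit-time argument on the maximal interval of existence, that $\psi_n(t,\cdot)$ never leaves the well $\{|Q_\epsilon(u)-a_i|<\tilde\rho\}\cap\{\|\nabla u\|_2<R_0\}$. Indeed, conservation gives $E_\epsilon(\psi_n(t,\cdot))=c_n$ and $\|\psi_n(t,\cdot)\|_2=m_n$ for all $t$; as long as $\|\nabla\psi_n\|_2\le R_0$ one has $E_\epsilon=E_{\epsilon,T}\ge\bar g_{m_n}(\|\nabla\psi_n\|_2)$, so at a hypothetical first time where $\|\nabla\psi_n\|_2=R_0$ this forces $c_n\ge\bar g_{m_n}(R_0)=g_{m_n}(R_0)\to g_a(R_0)=0$, contradicting $c_n\to\beta_\epsilon^i<0$ for large $n$. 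Hence $\|\nabla\psi_n\|_2<R_0$ throughout, so $E_{\epsilon,T}(\psi_n(t,\cdot))=c_n$; rescaling any hypothetical first boundary point to $S(a)$ and invoking Lemma \ref{lem4.7} gives $c_n\ge\tilde\beta_\epsilon^i+o(1)>\beta_\epsilon^i+\rho_1/2+o(1)$, again a contradiction. The bound $\|\nabla\psi_n\|_2<R_0$ also yields an a priori $H^1$ estimate, so the blow-up alternative upgrades local to global existence.

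Finally, $\phi_n:=\psi_n(t_n,\cdot)$ lies in the well with $\|\phi_n\|_2=m_n\to a$, $\|\nabla\phi_n\|_2<R_0$, $|Q_\epsilon(\phi_n)-a_i|<\tilde\rho$ and $E_\epsilon(\phi_n)=c_n\to\beta_\epsilon^i$, so the compactness statement of the second paragraph applies and gives $\mathrm{dist}_{H^1}(\phi_n,\Omega_i)\to 0$, contradicting $\mathrm{dist}_{H^1}(\phi_n,\Omega_i)\ge\theta_0$. The main obstacle is the trapping step: one must control the genuine, non-truncated energy and accommodate that the perturbed data need not lie on $S(a)$, which forces the use of $\bar g_{m_n}$ and a rescaling to $S(a)$ at $\partial\theta_\epsilon^i$; the whole scheme hinges on the strict gap $\beta_\epsilon^i<\tilde\beta_\epsilon^i$ of Lemma \ref{lem4.7} and on $\beta_\epsilon^i$ being bounded away both from $0$ and from $\Upsilon_{\infty,T,a}$.
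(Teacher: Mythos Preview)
Your proposal is correct and follows essentially the same Cazenave--Lions scheme as the paper: trap the flow in $\{\|\nabla u\|_2<R_0\}\cap\theta_\epsilon^i$ via the two energy barriers (the barrier at $R_0$ yields global existence, and the strict gap $\beta_\epsilon^i<\tilde\beta_\epsilon^i$ from Lemma~\ref{lem4.7} prevents escape through $\partial\theta_\epsilon^i$), then invoke compactness of minimizing sequences at level $\beta_\epsilon^i$ after rescaling onto $S(a)$ to reach a contradiction. The only cosmetic difference is that the paper fixes an auxiliary $a_1>a$ with $\bar g_{a_1}(R_0)=\beta_\epsilon^i/2$ to make the $R_0$--trapping uniform in the initial data, whereas you argue asymptotically via $\bar g_{m_n}(R_0)\to g_a(R_0)=0$; both versions work.
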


\begin{proof}
Letting $r_1$ be such that $\bar{g}_a(r_1)=\beta_{\epsilon}^i$, and
considering (\ref{e2.8}), the definition of $\Omega_i$ and
$\beta_{\epsilon}^i<0$, we know that
\begin{equation*}
\|\nabla v\|_2\leq  r_1<R_0,\ \text{for\ any\ }v\in\Omega_i.
\end{equation*}
Let $a_1>a$ be such that
$\bar{g}_{a_1}(R_0)=\frac{\beta_{\epsilon}^i}{2}$. There exists
$\delta>0$ such that if
\begin{equation*}
u_0\in H^1(\mathbb{R}^N)\ \text{and}\
\text{dist}_{H^1(\mathbb{R}^N)}(u_0,\Omega_i)<\delta,
\end{equation*}
then
\begin{equation*}
\|u_0\|_2\leq a_1,\ \|\nabla u_0\|_2\leq r_1+\frac{R_0-r_1}{2},\
E_{\epsilon,T}(u_0)\leq \frac{2}{3}\beta_{\epsilon}^i.
\end{equation*}

Denoting by $\psi(t,\cdot)$ the solution to (\ref{e4.17}) with
initial data $u_0$ and denoting by $[0, T_{\text{max}})$ the maximal
existence interval for $\psi(t,\cdot)$, we have classically that
either $\psi(t,\cdot)$ is globally defined for positive times, or
$\|\nabla \psi(t,\cdot)\|_2=+\infty$ as $t\to T_{\text{max}}^-$, see
(\cite{Tao-Visan-Zhang07}, Section 3). Set $\tilde{a}=\|u_0\|_2$.
Note that $\|\psi(t,\cdot)\|_2=\|u_0\|_2$ for all $t\in
(0,T_{\text{max}})$ by the conservation of the mass. If there exists
$\tilde{t}\in (0,T_{\text{max}})$ such that $\|\nabla
\psi(\tilde{t},\cdot)\|_2=R_0$, then
\begin{equation*}
E_{\epsilon}(\psi(\tilde{t},\cdot))=E_{\epsilon,T}(\psi(\tilde{t},\cdot))
\geq \bar{g}_{\tilde{a}}(R_0)\geq
\bar{g}_{a_1}(R_0)=\frac{\beta_{\epsilon}^i}{2},
\end{equation*}
which contradicts the conservation of the energy
\begin{equation*}
E_{\epsilon}(\psi(t,\cdot))=E_{\epsilon}(u_0)\leq
\frac{2}{3}\beta_{\epsilon}^i,\ \text{for\ all\ }t\in
(0,T_{\text{max}}).
\end{equation*}
Thus,
\begin{equation}\label{e4.18}
\|\nabla \psi(t,\cdot)\|_2<R_0\ \text{for\ all\ } t\in
[0,T_{\text{max}}),
\end{equation}
which implies that $\psi(t,\cdot)$ is globally defined for positive
times.

Next we prove that $\Omega_i$ is orbitally stable. The validity of
Lemma \ref{lem4.3} for complex valued function can be proved exactly
as in Theorem 3.1 in \cite{Hajaiej-Stuart04}. Thus, the orbital
stability of $\Omega_i$ can be proved by modifying the classical
Cazenave-Lions argument \cite{Cazenave-Lions82} (see also
\cite{Li-Zhao20}). For the completeness,  we give the proof here.
Suppose by contradiction that there exist sequences
$\{u_{0,n}\}\subset H^1(\mathbb{R}^N)$ and $\{t_{n}\}\subset
\mathbb{R}^+$ and a constant $\theta_0>0$ such that for all $n\geq
1$,
\begin{equation}\label{e44.1}
\inf_{v\in \Omega_i}\|u_{0,n}-v\|<\frac{1}{n}
\end{equation}
and
\begin{equation}\label{e44.2}
\inf_{v\in \Omega_i}\|\psi_n(t_n,\cdot)-v\|\geq \theta_0,
\end{equation}
where $\psi_n(t,\cdot)$ is the solution to (\ref{e4.17}) with
initial data $u_{0,n}$. By (\ref{e4.18}), there exists $n_0$  such
that for $n>n_0$ it holds that  $\|\nabla \psi_n(t,\cdot)\|_2<R_0$
for all $t\geq 0$.

By (\ref{e44.1}), there exists $\{v_n\}\subset \Omega_i$ such that
\begin{equation}\label{e44.3}
\|u_{0,n}-v_n\|<\frac{2}{n}.
\end{equation}
That $\{v_n\} \subset \Omega_i$ implies that $\{v_n\}\subset
\theta_{\epsilon}^i$ is a $(PS)_{\beta_{\epsilon}^i}$ sequence of
$E_{\epsilon,T}$ restricted to $S(a)$. From the proof of Theorem
\ref{thm1.1},  there exists $v\in \Omega_i$ such that
\begin{equation*}
\lim_{n\to +\infty}\|v_n-v\|=0,
\end{equation*}
which combined with (\ref{e44.3}) gives that
\begin{equation}\label{e4.19}
\lim_{n\to +\infty}\|u_{0,n}-v\|=0.
\end{equation}
Hence,
\begin{equation*}
\lim_{n\to +\infty}\|u_{0,n}\|_2=\|v\|_2=a,\ \ \lim_{n\to
+\infty}E_{\epsilon}(u_{0,n})=E_{\epsilon}(v)={\beta}_{\epsilon}^i<\tilde{\beta}_{\epsilon}^i.
\end{equation*}
Then by the conservation of mass and energy, we obtain that
\begin{equation}\label{e5.2}
\lim_{n\to +\infty}\|\psi_n(t,\cdot)\|_2=a,\ \ \lim_{n\to
+\infty}E_{\epsilon}(\psi_n(t,\cdot))={\beta}_{\epsilon}^i,\
\text{for\ any\ }t\geq 0.
\end{equation}

Define
\begin{equation*}
\varphi_n(t,\cdot)=\frac{a\psi_n(t,\cdot)}{\|\psi_n(t,\cdot)\|_2},\
t\geq 0.
\end{equation*}
Then $\varphi_n(t,\cdot)\in S(a)$ and
\begin{equation}\label{e4.21}
\|\varphi_n(t,\cdot)-\psi_n(t,\cdot)\|\to 0\ \text{as}\ n\to+\infty
\ \text{uniformly\ in\ }t\geq 0,
\end{equation}
which combined with (\ref{e4.19}) gives that
\begin{equation*}
\lim_{n\to+\infty}\|\varphi_n(0,\cdot)-v\|=\lim_{n\to+\infty}
\left\|\frac{au_{0,n}}{\|u_{0,n}\|_2}-v\right\|=0.
\end{equation*}
Hence,  $\varphi_n(0,\cdot)\in \theta_{\epsilon}^{i}\backslash
\partial\theta_{\epsilon}^{i}$ for $n$ large enough because $v\in \theta_{\epsilon}^{i}\backslash
\partial\theta_{\epsilon}^{i}$. Using the method of continuity, $\lim_{n\to
+\infty}E_{\epsilon}(\varphi_n(t,\cdot))={\beta}_{\epsilon}^i$ for
all $t\geq 0$, and
${\beta}_{\epsilon}^i<\tilde{\beta}_{\epsilon}^i$, we obtain that
\begin{equation}\label{e5.1}
\text{for}\ n\ \text{large\ enough},\ \varphi_n(t,\cdot)\in
\theta_{\epsilon}^i\backslash\partial\theta_{\epsilon}^i\ \text{for\
all\ }t\geq 0.
\end{equation}
From (\ref{e5.2})-(\ref{e5.1}), $\{\varphi_n(t_n,\cdot)\}\subset
\theta_{\epsilon}^i$ is a minimizing sequence of $E_{\epsilon,T}$ at
level $\beta_{\epsilon}^i$, and from the proof of Theorem
\ref{thm1.1}, there exists $\tilde{v}\in \theta_{\epsilon}^i$ such
that
\begin{equation}\label{e33.25}
\lim_{n\to +\infty}\|\varphi_n(t_n,\cdot)-\tilde{v}\|=0,
\end{equation}
which combined with (\ref{e4.21}) gives that
\begin{equation*}
\lim_{n\to +\infty}\|\psi_n(t_n,\cdot)-\tilde{v}\|=0.
\end{equation*}
That contradicts (\ref{e44.2}). Hence $\Omega_i$ is orbitally stable
for any $i=1,2,\cdots,l$.
\end{proof}

\begin{theorem}\label{thm5.2}
Let $N,a,\eta,q,h,\epsilon_0$ be as in Theorem \ref{thm1.1},
$p=2^*$, $\epsilon\in(0,\epsilon_0)$, (\ref{e2.1}) and (\ref{e2.4})
hold. Further assume that $h(x)\in C^1(\mathbb{R}^N)$ and $h'(x)\in
L^{\infty}(\mathbb{R}^N)$. Then $\Omega_i \, (i=1,\cdots,l)$ is
orbitally stable under the flow associated with the problem
(\ref{e4.17}).
\end{theorem}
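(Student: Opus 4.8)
The plan is to mimic the proof of Theorem~\ref{thm5.3}, trapping the kinetic energy of the flow below $R_0$ through conservation of mass and energy and then running the Cazenave--Lions compactness argument; the only genuinely new ingredient in the Sobolev critical case is the Cauchy theory for (\ref{e4.17}), which is exactly why we now require $h\in C^1(\mathbb{R}^N)$ and $h'\in L^\infty(\mathbb{R}^N)$.

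First I would record the local well-posedness. Under these regularity hypotheses on $h$, the inhomogeneous energy-critical nonlinearity $u\mapsto h(\epsilon x)|u|^{q-2}u+\eta|u|^{2^*-2}u$ is admissible for the usual Strichartz estimates, so for every $u_0\in H^1(\mathbb{R}^N)$ there is a unique maximal solution $\psi\in C([0,T_{\max});H^1(\mathbb{R}^N))$ of (\ref{e4.17}) conserving both the mass $\|\psi(t,\cdot)\|_2$ and the energy $E_\epsilon(\psi(t,\cdot))$, together with a blow-up alternative expressed through an energy-critical space-time norm. Next I would copy the barrier argument of Theorem~\ref{thm5.3} unchanged: for $v\in\Omega_i$ one still has $\|\nabla v\|_2\le r_1<R_0$ with $\bar g_a(r_1)=\beta_\epsilon^i<0$; choosing $a_1>a$ so that $\bar g_{a_1}(R_0)=\beta_\epsilon^i/2$ and then $\delta>0$ small, every $u_0$ with $\mathrm{dist}_{H^1(\mathbb{R}^N)}(u_0,\Omega_i)<\delta$ obeys $\|u_0\|_2\le a_1$, $\|\nabla u_0\|_2<R_0$ and $E_\epsilon(u_0)\le\frac23\beta_\epsilon^i$. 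If $\|\nabla\psi(t,\cdot)\|_2$ ever reached $R_0$, then using $E_\epsilon=E_{\epsilon,T}$ on $\{\|\nabla u\|_2\le R_0\}$ together with (\ref{e2.8}) and (\ref{e2.12}) we would obtain $E_\epsilon(\psi(t,\cdot))\ge\bar g_{a_1}(R_0)=\beta_\epsilon^i/2$, contradicting energy conservation; hence $\|\nabla\psi(t,\cdot)\|_2<R_0$ for all $t\in[0,T_{\max})$.

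The hard part will be to promote this uniform gradient bound to global existence, because in the critical case a bound on $\|\nabla\psi\|_2$ alone does not close the blow-up alternative. This is precisely where assumption (\ref{e2.4}) enters: it guarantees $R_0<\eta^{-\frac{N-2}{4}}S^{\frac{N}{4}}$, so the solution stays strictly below the energy-critical threshold attached to the Aubin--Talenti constant $S$. I would invoke the below-threshold global well-posedness theory for the focusing energy-critical problem to conclude that the critical space-time norm remains finite on every bounded interval, so the blow-up alternative forces $T_{\max}=+\infty$ and $\psi$ is globally defined with $\|\nabla\psi(t,\cdot)\|_2<R_0$ for all $t\ge0$.

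With global existence and confinement secured, the remaining argument is word-for-word that of Theorem~\ref{thm5.3}. Supposing $\Omega_i$ were not stable produces sequences $\{u_{0,n}\}$ and $\{t_n\}$ satisfying (\ref{e44.1}) and violating stability as in (\ref{e44.2}); setting $\varphi_n=a\psi_n/\|\psi_n(t,\cdot)\|_2\in S(a)$, one checks via conservation of mass and energy and the strict gap $\beta_\epsilon^i<\tilde\beta_\epsilon^i$ that $\varphi_n(t,\cdot)$ stays in $\theta_\epsilon^i\setminus\partial\theta_\epsilon^i$, so $\{\varphi_n(t_n,\cdot)\}$ is a minimizing sequence for $E_{\epsilon,T}$ at level $\beta_\epsilon^i$ inside $\theta_\epsilon^i$. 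The compactness established in the proof of Theorem~\ref{thm1.1} (valid for complex-valued functions as in Theorem~3.1 of \cite{Hajaiej-Stuart04}) then yields a limit in $\Omega_i$, contradicting (\ref{e44.2}). Thus the whole difficulty is concentrated in the energy-critical Cauchy theory and the below-threshold global existence step, the variational and compactness portion being inherited intact from the subcritical proof.
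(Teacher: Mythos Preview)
Your proposal is correct and follows exactly the strategy the paper indicates: the paper's own proof is the single sentence ``modify the arguments of Sections~3 and~4 in \cite{JEANJEAN-JENDREJ} and use the arguments of the proof of Theorem~\ref{thm5.3}'', and your outline is precisely a fleshed-out version of this---the Cauchy theory and below-threshold global existence for the energy-critical problem (requiring $h\in C^1$, $h'\in L^\infty$, and the bound $R_0<\eta^{-\frac{N-2}{4}}S^{N/4}$ from (\ref{e2.4})) are what \cite{JEANJEAN-JENDREJ} supplies, and the barrier plus Cazenave--Lions compactness argument is inherited verbatim from Theorem~\ref{thm5.3}.
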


\begin{proof}
The proof can be done by modifying the arguments of Sections 3 and 4
in \cite{JEANJEAN-JENDREJ} and using the arguments of the proof of
Theorem \ref{thm5.3}, so we omit it.
\end{proof}

\bigskip

\textbf{Acknowledgements.}  This work is supported by the National
Natural Science Foundation of China (No. 12001403).

% BibTeX users please use one of
%\bibliographystyle{spbasic}      % basic style, author-year citations
%\bibliographystyle{spmpsci}      % mathematics and physical sciences
%\bibliographystyle{spphys}       % APS-like style for physics
%\bibliography{}   % name your BibTeX data base

% Non-BibTeX users please use

\end{document}